\documentclass[12pt]{amsart}

\usepackage[latin1]{inputenc}

\usepackage{amsthm,amsmath,amssymb,amsfonts,enumerate,MnSymbol,latexsym,amsbsy}

\usepackage[usenames,dvipsnames,svgnames,table]{xcolor}

\usepackage[top=1in, bottom=1in, left=1in, right=1in]{geometry}

\usepackage[colorlinks=true, pdfstartview=FitV, linkcolor=ForestGreen,citecolor=ForestGreen, urlcolor=blue]{hyperref}


\usepackage{graphicx,caption}


\theoremstyle{plain}
\newtheorem{THEOREM}{Theorem}[section]

\newtheorem{theorem}[THEOREM]{Theorem}
\newtheorem{corollary}[THEOREM]{Corollary}
\newtheorem{lemma}[THEOREM]{Lemma}

\theoremstyle{definition}

\theoremstyle{remark}

\newtheorem{remark}[THEOREM]{Remark}


\newcommand{\thm}[1]{Theorem~\ref{#1}}
\newcommand{\lem}[1]{Lemma~\ref{#1}}


\def \a {\alpha}
\def \b {\beta}
\def \d {\delta}
\def \D {\Delta}
\def \g {\gamma}

\def \e {\varepsilon}

\def \l {\lambda}
\def \L {\Lambda}
\def \n {\nabla}
\def \s {\sigma}
\def \r {\rho}
\def \th {\theta}

\def \O {\Omega}


\def \bs {\boldsymbol{\sigma}}



\newcommand{\N}{\ensuremath{\mathbb{N}}}   
\newcommand{\Z}{\ensuremath{\mathbb{Z}}}   
\newcommand{\R}{\ensuremath{\mathbb{R}}}   
\newcommand{\T}{\ensuremath{\mathbb{T}}}   



\def \p {\partial}
\def \ra {\rightarrow}
\def \ss {\subset}
\def \bs {\backslash}

\def \per {\mathrm{per}}


 %
 %
 %
\DeclareMathOperator{\diver}{div} %
\DeclareMathOperator{\symb}{Sym} %

\DeclareMathOperator{\Lip}{Lip} %

\DeclareMathOperator{\pv}{p.v.}




\begin{document}
\title{Global Well-Posedness of a Non-local Burgers Equation: the periodic case}

\author{Cyril Imbert} \thanks{The work of C. Imbert is partially
  supported by ANR grant ANR-12-BS01-0008-01. C.I. is grateful to the
  Department of Mathematics, Statistics, and Computer Science at the
  University of Illinois at Chicago for warm hospitality during his
  visit.}

\author{Roman Shvydkoy}\thanks{The work of R. Shvydkoy is partially
  supported by NSF grant DMS--1210896 and DMS--1515705. R.S. is grateful to the
  Laboratoire d'Analyse et de Math\'ematiques Appliqu\'ees at
  Universit\'e Paris-Est Cr\'eteil for warm hospitality during
  multiple visits.}

\author{Francois Vigneron} \thanks{F. Vigneron is grateful to the
  Department of Mathematics, Statistics, and Computer Science at the
  University of Illinois at Chicago for warm hospitality during multiple
  visits.}

\address[C. Imbert and F. Vigneron]{UPEC, LAMA, UMR 8050 du CNRS\\
	61 Avenue du G\'en\'eral de Gaulle\\
	94010 Creteil} 
\address[R. Shvydkoy]{Department of Mathematics, Stat. and Comp. Sci.\\
M/C 249\\
      University of Illinois\\
      Chicago, IL 60607}

\email{cyril.imbert@math.cnrs.fr}
\email{shvydkoy@uic.edu}
\email{francois.vigneron@u-pec.fr}

\begin{abstract}
  This paper is concerned with the study of a non-local Burgers
  equation for positive bounded periodic initial data. The equation reads
  \[
  u_t - u |\n| u + |\n|(u^2) = 0.
  \]
We construct global classical solutions starting from smooth positive data, and global weak solutions starting from data in $L^\infty$. 
We show that any weak solution is instantaneously regularized into $C^\infty$. We also describe the long-time behavior of all solutions.
Our methods follow several recent advances in the regularity theory of parabolic integro-differential equations.
\end{abstract}

\keywords{parabolic integro-differential equations, Burgers, Euler, surface quasi-geostrophic equation, Schauder estimates, maximum principle}

\subjclass[2010]{35K55, 45K05, 76B03}

\maketitle

\section{Introduction}

Dynamics of fluid motion provide a rich source of evolution laws that defy a complete well-posedness theory. Apart from the classical three dimensional Euler and Navier-Stokes equations, scalar models such as the super-critical surface quasi-geostrophic (SQG) equation or Darcy's law of porous media pose core difficulties, both to the traditional and newly developed approaches. In recent years, several classes of models have appeared that either mimic the basic structure of the aforementioned equations or serve as viable models on their own. For example, in relation to the SQG model
\[
\th_t + u\cdot \n \th =0
\]
where the div-free velocity $u = R^{\perp} \th$ is the perpendicular Riesz transform of $\th$, A. Cordoba, D. Cordoba, and M. Fontelos \cite{corcorfon} have studied an analogous 1D model given by
$\th_t + (H \th )\th_x = 0$,
where $H$ denotes the Hilbert transform. SQG written in divergence form $$\th_t + \diver(u \th)=0$$  received its counterpart in the form of $\th_t + (\th H\th)_x=0$ which was studied in \cite{cccf}, as well as convex combinations of the two 1D models above. The latter combination first appeared in physics literature \cite{bxm} as a model for a conservation law such as the classical Burgers equation $$u_t + \frac12(u^2)_x =0,$$ but with a non-local flux $F=\th H\th$. In these models the relation between the drift and the driven scalar is ensured by a Fourier multiplier with an odd symbol. This sets them apart from other laws such as porous media, Burgers or Euler equation or the recent Moffat's scalar model of magnetostrophic turbulence of the Earth's fluid core \cite{moffatt}. In all the later cases, such a relation is furnished via an even symbol which generally entails a more singular behavior.

\bigskip
In this paper we study the following model:
\begin{equation}\label{e:main}
u_t - u |\n| u + |\n|(u^2) = 0, \quad x\in \R^d \text{ or } \T^d,
\end{equation}
where $|\n|=(- \D)^{1/2}$ denotes the square root of the Laplacian and has the symbol $|\xi|$. With the addition of viscosity, the model $u_t + u |\n| u - |\n|(u^2) = \nu \D u$ was proposed by P.G. Lemari\'{e} as a scalar case study of the 3D Navier-Stokes (note the opposite signs). The works of F. Lelievre \cite{Lelievre-art1},  \cite{Lelievre-art2},  \cite{Lelievre-PHD}
presented the construction of global Kato-type mild solutions for initial data in $L^3(\R^3)$ and of global weak Leray-Hopf-type solutions for initial data in $L^2(\R^3)$, $L^2_{uloc}(\R^3)$ and $\dot{M}^{2,3}(\R^3)$. A local energy inequality obtained for this model was suggestive of possible uniqueness for small initial data in critical spaces, in a similar fashion to 3D Navier-Stokes.
The focus of the present paper will be on the inviscid case $\nu=0$.

Without viscosity the model bears resemblance to some of the ``even" inviscid cases in the sense explained above. For example, dropping the $1/2$ factor, the Burgers equation can be written in the form of a commutator: $$\partial_t u = [u,\partial_x]u.$$ Our model replaces $\p_x$ with the non-local operator $|\n|$ of the same order (hence our choice of name
for \eqref{e:main}).
The classical incompressible Euler equation is given by
\[
u_t + u \cdot \n u + \n p=0,
\]
where $p$ is the associated pressure given by $p = T(u \otimes u) + local$, where $T$ is a singular integral operator with an even symbol. We thus can draw an analogy between terms: $u \cdot \n u \sim -u|\n|u$ and $\n p \sim  |\n|(u^2)$. Actually, analogies with Euler or Burgers extend beyond the formal range. Let us discuss the basic structure of \eqref{e:main} in greater detail.  In what follows we give our
model~\eqref{e:main} the name of the \textsl{Non-local Burgers} equation.

\bigskip
At a formal level, \eqref{e:main} shares a more intimate connection with other equations of fluid mechanics. 
For example, if one applies formally the commutator theorem in the scalar case, even though $u$ might not be smooth and $|\xi|$ surely isn't, one gets:
$$[u,|\nabla|]
=\operatorname{Op} \left(\frac{i\xi}{|\xi|} \: u'(x) + \delta_{\xi=0}\: u''(x) + \ldots\right).$$
Thus in dimension 1, the equation becomes formally $\partial_t u = u' Ru + \left(\int_\R u\right) u'' +\ldots$
where $R$ denotes the Riesz transform.
The term $u'Ru$ is a scalar flavor of the SQG nonlinearity written above.

\bigskip
Let us recall that, in $\R^d$, the operator $|\n|$ enjoys an integral representation in terms of convolution with the kernel
$K(z) = \frac{c_d}{|z|^{d+1}}$ for some normalizing constant $c_d>0$ depending only on the dimension. The model  \eqref{e:main} can thus be rewritten in an integral form:
\begin{equation}\label{e:main2}
\partial_t u = \operatorname{p.v.} \int_{\R^d} K(y-x) (u(y)-u(x)) u(y)dy.
\end{equation}
If $u$ is periodic with period $2\pi$ in all coordinates, the representation above can alternatively be written as
\begin{equation}\label{e:main3}
\partial_t u = \operatorname{p.v.} \int_{\T^d} K_{\per}(y-x) (u(y)-u(x)) u(y)dy,
\end{equation}
where $\T^d$ is the torus and $K_{\per}(z)=\sum_{j \in \Z^d} \frac{c_d}{|z+2\pi j|^{d+1}}$. More explicitly  in 1D,  $K_{\per}(z) = \frac{1}{4\sin^2(z/2)}$. For periodic solutions, both representations are valid due to a sufficient decay of~$K$ at infinity. The former is more amenable to an analytical study due to the explicit nature of the kernel and applicability of known results, while the latter will be more useful in our 1D numerical simulations presented at the end.

\bigskip
The following basic structure properties of the model can be readily obtained from either representation, at a formal level. Let $u$ be a solution to~\eqref{e:main}.
\begin{itemize}
\item[(TI)] Translation invariance: if $x_0 \in \R^d$, $t_0 >0$ then $u(x+x_0,t+t_0)$ is another solution. In particular, the periodicity of the initial condition is preserved.
\item[(TR)] \label{TR} Time reversibility: if $t_0 >0$, then $-u(x,t_0 - t)$ is a solution too. 
\item[(SI)] Scaling invariance: for any $\l>0$ and $\a,\b\in \R$, $\lambda^\alpha u( \lambda^\beta x,\lambda^{\alpha +\beta} t)$ is another solution.
\item[(MP)] Max/Min principle: if $u>0$, then its maximum is decreasing and its minimum is increasing.
\item[(AMP)] Anti-Max/Min principle: if $u<0$, then its maximum is increasing and its minimum is decreasing.
\item[(E)] Energy conservation: $\|u(t)\|_{L^2} = \|u_0\|_{L^2}$ is obtained by testing \eqref{e:main} with $u$.
\item[(HP)] Higher power law: for any $p\in(2,\infty)$, the following quantity is conserved:
$$\|u(t)\|_{L^p}^p+ \frac{p}{2} \int_0^t \iint_{\Omega^2}  u(x)u(y) (|u(y)|^{p-2} -|u(x)|^{p-2}) (u(y) -u(x)) K(x,y)dxdy.$$
This equation, obtained by testing \eqref{e:main} with $|u|^{p-2}u$, implies the decay of those $L^p$ norms when $u>0$.
\item[(ML)] \label{ML} First momentum law: for either $\O = \R^d$ or $=\T^d$, integrating \eqref{e:main2} and using the Gagliardo-Sobolevskii representation of the $\dot{H}^{\frac12}(\O)$-norm (see \cite{hitchhiker}):
\begin{equation}\label{eq:ML}
\int_{\O} u(x,t') dx = \int_{\O} u(x,t) dx + \int_t^{t'} \|u(s)\|_{\dot{H}^{\frac12}(\O)}^2 ds.
\end{equation}
On $\T^d$, one has $L^2\subset L^1$ and this property combines nicely with (E) and ensures that $$u\in L^2(\R_+;\dot H^{1/2}(\T^d)),$$
regardless of the sign of $u_0$.
\end{itemize}
For positive solutions $u>0$ the right-hand side of \eqref{e:main2} gains a non-local elliptic structure of order $1$, while for $u<0$ the equation behaves like a backward heat equation. However, with the energy conservation (E), the model shares common features with conservative systems such as Euler, giving it a second nature. As a result, for $u>0$, we see an accumulation of energy in the large scales and a depletion of energy on small scales. More specifically, as seen from from the momentum law (ML), the dissipated energy from high frequencies gets sheltered in the first Fourier mode. At least on the phenomenological level, this property parallels what is known as the backward energy cascade in the Kraichnan theory of 2D turbulence (see \cite{kraich} and references therein). This makes our model potentially viable in studying scaling laws for the energy spectrum, structure functions, etc. 

\bigskip
The aim of this paper is to develop  a well-posedness theory for the model and study its long-time behavior. We make use of both sides of its dual nature by way of blending classical techniques relevant to the Euler equation, such as energy estimates, a Beale-Kato-Mayda criterion, etc \cite{mb-book}, with recently developed tools of regularity theory for  parabolic integro-differential equations \cite{ccv,cv,fk,iss,tianling,mp14}. Let us give a brief summary of our results with short references to the methods used.  Complete statements are given in Theorems~\ref{thm:local}, \ref{thm:BKM}, \ref{t:global}, \ref{t:weak}, \ref{thm:asympt}. First, we declare that all the results are proved in the periodic setting, except local existence which holds in both the periodic and the open case. Periodicity provides extra compactness of the underlying domain which for positive data, due to the minimum principle (MP), warrants  uniform support from below in space and time, which further entails uniform ellipticity of the right-hand side of \eqref{e:main2}. 

\medskip
\noindent {\bf Local existence.} For initial data $u_0 \in H^m(\O^d)$ on $\O^d = \R^d$ or $\T^d$, with $u_0>0$ pointwise and $m>d/2+1$, there exists a local solution in $C([0,T);H^m(\O^d)) \cap C^1([0,T); H^{m-1}(\O^d))$. Even for this local existence result, the positivity of the initial data is essential. We also have a BKM regularity criterion: if $\int_0^T \| \n u(t) \|_{L^\infty} dt<0$, the solution extends smoothly beyond $T$. The proof goes via a smoothing scheme based on a desingularization of the kernel.

\smallskip
\noindent {\bf Instant regularization.}  Any positive classical solution to \eqref{e:main} on a time interval $[0,T)$ satisfies the following bounds: for any
$k\in \N$, there exists $\a_k \in (0,1)$ such that for any
$0<t_0<T$:
\begin{equation}\label{e:instant}
\begin{split}
\| u \|_{C^{k+\a_k ,\a_k}_{x,t}(\T^d \times (t_0,T))} & \leq C (d,k, t_0,T,\min u_0, \max u_0)\\
\| \n_x^k u \|_{\Lip(\T^d \times (t_0,T))} & \leq C (d, k, t_0,T,\min u_0, \max u_0).
\end{split}
\end{equation}
To achieve this we symmetrize the right-hand side of \eqref{e:main2} by multiplying it by $u$ and then writing the evolution equation for $w = u^2$:
\begin{align}
 \partial_t w & = \pv \int (w(y)-w(x)) k(t,x,y) dy  \label{eq:w-lin} \\
 k(t,x,y) &= \frac{2 u(x,t) u(y,t)}{u(x,t)+u(y,t)} \frac{c_d}{|x-y|^{d+1}}. \label{eq:kernel}
\end{align}
The active kernel $k$ is symmetric and satisfies uniform ellipticity bound $\frac{\L^{-1}}{|z|^{d+1}}< k(z) < \frac{\L}{|z|^{d+1}}$. This puts the model within the range of recent results of Kassmann et al. \cite{bbrck,kass} and of Caffarelli-Chan-Vasseur \cite{ccv} where Moser / De Giorgi techniques were adopted to yield initial H\"older regularity for $w$ and hence for $u$ by positivity, i.e. estimate \eqref{e:instant} with the index $k=0$. See also \cite{komatsu,chen}. We then apply our new Schauder estimates for parabolic integro-differential equations with a general kernel \cite{iss}, see also \cite{tianling,mp14}, to obtain the full range of bounds \eqref{e:instant}.

\smallskip
\noindent {\bf Global existence.} It readily follows from the BKM criterion and the instant regularization property.

\smallskip
\noindent {\bf Global existence of weak solutions.} Since the bounds \eqref{e:instant} depend essentially only on the $L^\infty$-norm of the initial condition, we can construct global weak solutions starting from any $u_0\in L^\infty(\T^d)$, $u_0>0$. The solution belongs to the natural class $L^2([0,T); H^{1/2}) \cap L^\infty([0,T) \times \T^d)$ for all $T>0$. The initial data $u_0$ is realized both in the sense of
the $L^\infty $ weak$^*$-limit and in the strong topology of $L^2$. Such a solution satisfies \eqref{e:instant} instantly. A surprising difficulty emerged here in recovering the initial data since the sequence of approximating solutions from mollified data may not be weakly equicontinuous near time $t=0$. A weak formulation of \eqref{e:main} does not allow us to move the full derivative onto the test function. See Section~\ref{s:weak2} for a complete discussion.

\smallskip
\noindent {\bf Finite time blowup for $u_0<0$.} As follows from the previous discussion and (TR), a negative solution may start from $C^\infty$ data and develop into $L^\infty$ in finite time. We don't know whether a more severe instantaneous blowup occurs for negative data. Our requirement for local existence certainly supports this. However, the numerics presented in Section~\ref{par:numerics} suggests that for some mostly positive data with  a small subzero drop, the positive bulk of the solution may persevere. The solution gets dragged into positive territory and exists globally. 

\smallskip
\noindent {\bf Long-time asymptotics.} As $t \ra +\infty$, any weak solution to \eqref{e:main} converges to a constant, namely $\frac{\|u_0\|_{L^2}}{\sqrt{|\T^d|}}$, consistent with (E), in the following strong sense: the amplitude of $u(t)$ tends to $0$ exponentially fast with a rate proportional to $ \min u_0$. The semi-norm $\| \n u (t) \|_{L^\infty}$ does the same. Thus, there are no small-scale structures left in the limit. The latter statement requires more technical proof which relies on an adaptation of the recent Constantin-Vicol proof of regularity for the critical SQG equation, \cite{cv}.

\bigskip
The organization of the paper follows the order of the results listed above. To shorten the notations, we frequently use $\|\cdot\|_m$ to denote the Sobolev norm of $H^m$ and $| \cdot |_p$ to denote the $L^p$-norm.

\section{Global well-posedness with positive initial data}
\label{par:local}

\subsection{Local well-posedness}
We start our discussion with local well-posedness in regular classes. 
Let $\O^d$ denote $\R^d$ or $\T^d$. 
\begin{theorem}[Local well-posedness]\label{thm:local}
  Given a pointwise positive initial data $u_0 \in H^m(\O^d)$ where
  $m > d/2+1$ is an integer, there exists a time $T>0$ and a unique solution to
  \eqref{e:main} with initial condition $u_0$, which belongs to the
  class $C([0,T);H^m(\O^d)) \cap C^1([0,T); H^{m-1}(\O^d))$. Moreover, $u(x,t)>0$
  for all $(x,t) \in \O^d \times [0,T)$, and the maximum $\max_{\O^d}
  u(t)$ is strictly decreasing in time.
\end{theorem}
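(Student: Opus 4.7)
The strategy is to build the solution as a limit of regularized problems with bounded kernels, derive uniform $H^m$ bounds that crucially exploit $u_0>0$, and then pass to the limit. Uniqueness is obtained by a Gronwall argument at one derivative less.

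First I would desingularize: replace $K(z) = c_d/|z|^{d+1}$ by a truncated, integrable kernel $K_\eps$ (e.g.\ $K_\eps = K \cdot \chi_{|z|\ge\eps}$), giving the approximate equation
\[
\partial_t u^\eps = \int K_\eps(y-x)(u^\eps(y)-u^\eps(x))u^\eps(y)\, dy.
\]
This right-hand side is a locally Lipschitz vector field on $H^m(\O^d)$, so Picard--Lindel\"of yields a unique short-time solution $u^\eps$. Positivity propagates by a pointwise minimum principle: at a minimum point $x_0$ of $u^\eps(\cdot,t)$ with $u^\eps(x_0)>0$, the integrand is non-negative and $\partial_t u^\eps(x_0,t)\ge 0$. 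The same argument at a maximum point gives $\max u^\eps(\cdot,t)$ non-increasing, and since the integrand is strictly negative on a set of positive measure as soon as $u^\eps$ is non-constant, the decrease is strict.

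The core step is a uniform-in-$\eps$ $H^m$ estimate. Applying $\partial^\alpha$ with $|\alpha|=m$ to the equation and testing against $v:=\partial^\alpha u^\eps$, the central identity is the fractional Leibniz rule
\[
|\n|(fg) = f|\n|g + g|\n|f - B(f,g),\qquad B(f,g)(x) := \pv\!\int K_\eps(x-y)(f(x)-f(y))(g(x)-g(y))\,dy,
\]
with $B(v,v)\ge 0$. The top-order contributions from $u|\n|u$ and $-|\n|(u^2)$ collapse to
\[
\tfrac12\tfrac{d}{dt}\|v\|_{L^2}^2 = -\tfrac12\int u^\eps\, B(v,v)\,dx + \tfrac12\int v^2 |\n|u^\eps\,dx + (\text{Moser remainders}),
\]
and the first term on the right is $\le 0$ \emph{precisely} because $u^\eps>0$. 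The remaining terms are dominated by $\||\n|u^\eps\|_\infty \|v\|_{L^2}^2$ together with Kato--Ponce / Moser commutator estimates, which, via $m>d/2+1$ and Sobolev embedding, close to $\|u^\eps\|_{H^m}^3$. Summing over $|\alpha|\le m$ and applying Gronwall yields a common existence time $T>0$ on which $\|u^\eps(t)\|_{H^m}$ stays bounded uniformly in $\eps$.

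With this $H^m$ control, and a matching $C^1([0,T];H^{m-1})$ bound read off directly from the equation, Aubin--Lions extracts a subsequence converging to a limit $u$ that satisfies~\eqref{e:main2} in the stated class, inherits $u>0$, and preserves the strict monotonicity of the maximum. Uniqueness is obtained by writing the equation for $v=u_1-u_2$ (a linear equation with coefficients depending on $u_1,u_2\in H^m$), closing an $L^2$ (or $H^{m-1}$) estimate via the same Leibniz trick, and invoking Gronwall. The main obstacle is the $H^m$ estimate itself: the non-local nature of $|\n|$ forbids a clean integration by parts, and positivity of $u$ must be harnessed through the sign of the bilinear form $B(v,v)$. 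Recognizing the Leibniz identity above as the mechanism that simultaneously cancels the leading-order loss and generates a favorable dissipative term is the technical crux.
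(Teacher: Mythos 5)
Your proposal follows essentially the same route as the paper: regularize the kernel, obtain local existence of the regularized problem by a fixed-point argument, propagate positivity and the strictly decreasing maximum by pointwise evaluation, close a uniform-in-$\eps$ $H^m$ estimate by using $u>0$ to discard the top-order term (your identity $-\int u\, v\,|\nabla|_\eps v = -\tfrac12\int u\,B(v,v) - \tfrac12\int v^2\,|\nabla|_\eps u$ is exactly the paper's elementary inequality $-a(a-b)\le -\tfrac12(a^2-b^2)$ in disguise), and then pass to the limit, with uniqueness from an $L^2$ Gronwall estimate that again uses positivity of one solution. The only differences are cosmetic: the paper desingularizes with $K_\delta(z)=c_d(\delta^2+|z|^2)^{-(d+1)/2}$ rather than truncating, and replaces your Aubin--Lions subsequence extraction by showing the whole regularized family is Cauchy in $C([0,T);L^2)$ (the same computation that underlies your uniqueness step), deferring the strong $C([0,T);H^m)$ continuity to the standard Majda--Bertozzi argument.
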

The proof in the case of $\R^d$ requires slightly more technical care in the maximum principle part, while being similar in the rest of the argument. We therefore present it in $\R^d$ only. In the case of the torus $\T^d$, however, we will also obtain a complementary statement for the minimum: $\min_{\T^d} u(t)$ is a strictly increasing function of time, thus the amplitude is shrinking. In Section~\ref{sec:ltb} we will elaborate much more on the asymptotic behavior of the amplitude. 
\begin{proof}
The proof will be split into several steps. 

\medskip
\noindent \emph{Step 1: Regularization.} Let us consider the following
regularization of the kernel
\[
K_\d(z) = \frac{c_d}{(\d^2+|z|^2)^{\frac{d+1}{2}}},
\]
and the corresponding operator
\[
|\n|_\d u = \int_{\R^d} K_\d(x-y) (u(x)-u(y)) dy = u \int_{\R^d} K_\d(z)dz -T_\d u
\]
where $T_\d$ is the convolution with $K_\d$.
The regularized equation takes the form
\begin{equation}
\partial_t u = [u,|\n|_\d]u =  \int_{\R^d} K_\d(x-y) (u(y)-u(x)) u(y)dy = -[u,T_\d]u.\label{regK}
\end{equation}
Note that $T_\d$ is infinitely smoothing, i.e.  $\|T_\d u\|_{s} \leq c_{\d,s,s'} \|
u\|_{s'}$, for any $0\leq s' \leq s$. So, by the standard quadratic
estimates, the right-hand side of \eqref{regK} is quadratically
bounded and locally Lipschitz in $H^m$. Thus, by the Fixed Point
Theorem, there is a local solution $u \in C^1([0,T); H^{m})$ with the
same initial condition $u_0$. Here $T$ depends on $\|u_0\|_m$ and
$\d$. For later use, note that $|u(t)|_2=|u_0|_2$ is conserved.

\medskip
\noindent \emph{Step 2: Maximum principle.} Suppose $u_0 \in H^m$,
$u_0 > 0$, and $u \in C^1([0,T); H^{m})$ is a local solution to
\eqref{regK} with the initial condition $u_0$. As $H^m(\R^d)
\hookrightarrow C^1(\R^d)$ for $m>d/2+1$ (even better is true), and
$u(x,t) \ra 0$ as $x\ra \infty$, then $u(t)$ has and attains its
maximum $M(t) = \max_{\R^d} u(t)$. We claim that $u(x,t)>0$, for all
$(x,t) \in \R^d \times [0,T)$, and the maximum function $M(t)$ is
strictly decreasing on $[0,T)$. Let us prove the first claim first.

Let us fix $R>0$ and show that $u$ never vanishes on $(0,T) \times
B_R(0)$. Suppose it does. Let us consider
\[
t_0 = \inf\{t\in (0,T): \exists |x|\leq R, u(x,t) = 0\}.
\]
By the boundedness of $(0,T) \times B_R(0)$ and the continuity of $u$, $t_0$
is attained. Thus, since $u_0>0$, then $t_0>0$. Let $x_0\in B_R(0)$ be
such that $u(x_0,t_0) = 0$. Evaluating \eqref{regK} at $(x_0,t_0)$ we
obtain
\[
u_t(x_0,t_0) =  \int_{\R^d} K_\d(x_0-y) u^2(y)dy >0.
\]
Observe that the right-hand side is strictly positive since the energy of solutions to \eqref{regK} is conserved.
This shows that for some earlier time $t<t_0$ there exists $x\in
B_R(0)$ where $u$ vanishes, which is a contradiction. Since the
argument holds for all $R>0$, the claim follows.

Let us prove the second claim now. Suppose that $M(t)$ is not strictly
decreasing on $[0,T)$. This implies that there exists a pair of times
$0\leq t' < t'' <T$ such that $M(t') \leq M(t'')$.
Let us show that there exists a $t_0 >t'$, such that $M(t_0) \geq M(t)$ for all $t\in[t', t_0]$.
If $M(t') < M(t'')$, then by the continuity of $M(t)$, $M$ attains its
maximum on the interval $[t',t'']$. Let $t_0\in[t',t''] $ be the left
outmost point where the maximum of $M$ is attained. Then $t_0 > t'$,
and $M(t_0) \geq M(t)$ for all $t'\leq t \leq t_0$.
If, on the contrary, $M(t')=M(t'')$ then either one can shrink the interval to fullfill the previous assumption or $M(t)$ is
constant throughout  $[t',t'']$. In either case, there exists, as claimed, a $t_0 >
t'$, such that $M(t_0) \geq M(t)$ for all $t'\leq t \leq
t_0$. Let us consider a point $x_0 \in \R^d$ such that $u(t_0,x_0) = M(t_0)$.
Then
\begin{equation}
\partial_t u(t_0,x_0) = \int_{\R^d} K_\d(x-y) (u(y)-u(x_0)) u(y)dy<0.
\end{equation}
So, at an earlier time $t<t_0$, one must have $u(t,x_0)> M(t_0)$ in contradiction with the initial assumption.

\medskip
\noindent \emph{Step 3: $\d$-independent bounds.} Let us observe the following representation formula that follows easily from $u(x)-u(y)=\int_0^1 \nabla u((1-\lambda)x+\lambda y)\cdot (x-y) d\lambda$~:
\[
|\n|_\d u = \int_0^1 R_{\l \d} (\n u )\: d\l,
\]
where $R_{\g}$ is the smoothed (vector) Riesz transform given by a convolution with the  kernel
$\Phi_\g(z)=\frac{z}{(\g^2 + |z|^2)^\frac{d+1}{2}}$. Let us notice that $\Phi_\g(z)= \frac{1}{\g^d} \Phi_1\left(\frac{z}{\g}\right)$, while $\Phi_1$ is the $z$-multiple of the Poisson kernel. Therefore, on the Fourier side, 
\[
\widehat{\Phi_\g}(\xi) = \frac{i\xi}{|\xi|} e^{-\g |\xi|}.
\]
Since these symbols are uniformly bounded, the family of operators $\{ R^i_\g\}_{i,\g} $ is uniformly bounded in $L^2$ and any Sobolev space $H^k$. Thus, we have uniform estimates
\begin{equation}\label{eq:riesz}
\| |\n|_\d u \|_k \leq C(d,k) \| \n u\|_k,
\end{equation}
for all $k\geq 0$.  Finally, note that, in any space dimension, the full symbol of $|\n|_\d$ is given by
\begin{equation}\label{symD}
\symb |\n|_\d = |\xi| \int_0^1 e^{-\l \d |\xi|} d\l = \frac{1}{\d}(1-e^{-\d |\xi|}).
\end{equation}

\medskip
Let $s$ be a multi-index of order $|s| = m$. Differentiating \eqref{regK}, we obtain
\[
\p_t \p^s u = \sum_{0\leq \a < s} \p^{s-\a} u |\n|_\d \p^\a u + u |\n|_\d \p^s u - 2 |\n|_\d(u \p^s u) - \sum_{0<\a< s} |\n|_\d(\p^\a u \p^{s-\a} u).
\]
Let us test with $\p^s u$. We have:
\begin{equation}\label{eq:work}
\begin{split}
\frac12 \p_t |\p^s u|_2^2 & =  \int \p^s u \sum_{0\leq \a < s} \p^{s-\a} u |\n|_\d \p^\a u + \int u \p^s u |\n|_\d \p^s u - 2 \int \p^s u |\n |_\d(u \p^s u) \\
& - \int \p^s u \sum_{0<\a< s} |\n|_\d(\p^\a u \p^{s-\a} u).
\end{split}
\end{equation}
The middle two terms of \eqref{eq:work} contain derivatives of order $m+1$.
By symmetry however, they add up to $ - \int u \p^s u |\n|_\d \p^s u$. By the positivity of $u$, a bound for this term follows from the elementary identity $-a(a-b)\leq -\frac{1}{2}(a^2-b^2)$:
\[\begin{split}
- \int u \p^s u |\n|_\d \p^s u & = - \iint u(x) \p^s u(x)(\p^su(x)-\p^su(y)) K_\d(x-y)dxdy\\
&\leq  -\frac{1}{2} \iint u(x) (\p^su(x)^2-\p^su(y)^2) K_\d(x-y)dxdy \\
&\quad = -\frac{1}{2}  \int u |\n|_\d (\p^s u)^2 =  -\frac{1}{2}  \int (|\n|_\d u)  (\p^s u)^2.
\end{split}
\]
One thus gets, using \eqref{eq:riesz} for the last step:
\[
- \int u \p^s u |\n|_\d \p^s u  -\frac{1}{2}  \int (|\n|_\d u)  (\p^s u)^2 \lesssim |  |\n|_\d u |_\infty \| u\|_m^2 \lesssim \|  |\n|_\d u \|_{m-1} \|u\|_m^2
\lesssim \| u\|_m^3
\]
where all the constants in the inequalities are independent of $\d$.

The rest of the expression \eqref{eq:work} is simpler to deal with as it does not contain any other derivatives of order $m+1$.
To estimate the first sum we use the
Gagliardo-Nirenberg inequalities:
\begin{equation}\label{GN}
| \p^i u |_{\frac{2r}{|i|}} \leq |u|_\infty^{1- \frac{|i|}{r}} \| u\|_r^{\frac{|i|}{r}}, \quad 0\leq |i| \leq r.
\end{equation}
So, for any $0\leq \a < s$, we have
\[
\begin{split}
\int | \p^s u \p^{s-\a} u |\nabla|_\d \p^\a u | & \leq |\p^s u|_2 |\p^{s-\a} u|_{\frac{2(m-1)}{m-|\a|-1}} | |\n|_\d \p^\a u |_{\frac{2(m-1)}{|\a|}} \\
&\leq \|u\|_m |\n u|_\infty^{1- \frac{m-|\a|-1}{m-1}} \|\n u\|_{m-1}^{\frac{m-|\a|-1}{m-1}}
| |\n|_\d u|_\infty^{1- \frac{|\a|}{m-1}} \||\n|_\d u\|_{m-1}^{\frac{|\a|}{m-1}}\\
& \lesssim \|u\|_m^3.
\end{split}
\]
Using \eqref{eq:riesz}, we also estimate each term of the last sum of \eqref{eq:work}. For $0<\alpha<s$, one has:
\begin{equation}\label{}
\begin{split}
\int \p^s u |\n|_\d(\p^\a u \p^{s-\a} u) \lesssim \int |\p^s u|_2 |\n(\p^\a u \p^{s-\a} u)|_2.
\end{split}
\end{equation}
We have $|\n(\p^\a u \p^{s-\a} u)|_2 \leq |\p^\a \n u \p^{s-\a} u|_2 +
|\p^\a u \p^{s-\a} \n u|_2 $. We estimate the first term exactly as
previously. For the second term we obtain, again using
Gagliardo-Nirenberg inequalities,
\[
 |\p^\a u \p^{s-\a} \n u|_2 \leq | \p^\a u|_{\frac{2(m-1)}{|\a|-1}} | \p^{s-\a} \n u |_{\frac{2(m-1)}{m-|\a|}} \leq \| u\|_m^2.
 \]

We thus have obtained a Riccati-type differential inequality $\p_t \|u\|_m^3\leq C\|u\|_m^3$ which boils down to
\[
\p_t\|u\|_m \leq C \|u\|_m^2,
\]
with a constant $C$ independent of $\d$. This shows that the solution can
be extended to a time of existence $T$ independent of $\d$ as
well. Namely, we have the bound
\[
\| u(t) \|_m \leq \frac{\|u_0\|_m}{1- Ct \|u_0\|_m},
\]
and so the critical time is $T^* = (C\|u_0\|_m)^{-1}$. 

\medskip
\noindent \emph{Step 4: Limit.} For each $\d>0$, let $u_\d$ be the
solution to \eqref{regK} with the same initial data $u_0$. By the
previous reasoning, $u_\d \in C([0,T); H^m)$ uniformly in $\d$ for any fixed
time $T<T^*$. Let us fix $T<T^*$. Then, since $H^{m-1}$ is a Banach
algebra, we estimate the right-hand side of \eqref{regK} by:
\[
\| u_\d |\n |_\d u_\d - |\n |_\d(u_\d^2) \|_{m-1} \leq \|u_\d\|_{m-1} \|u_\d\|_{m}+ \|u_\d\|_{m}^2 \lesssim \|u_\d\|_{m}^2.
\]
This shows that $u_\d \in C^1([0,T); H^{m-1})$ uniformly in $\d$. 

We now turn to the convergence issue. Instead of relying on
the Lions-Aubin compactness lemma, which only provides a limit for a
subsequence, we show a more robust convergence statement for the family
$u_\d$ as $\d\ra 0$. We claim that the family is a Cauchy sequence in $C([0,T);
L^2)$. As a consequence of the interpolation inequality $\| f\|_{m'}
\leq \|f\|_0^{1-m'/m} \|f\|_m^{m'/m}$, for $m'<m$, it also means
that the sequence is a Cauchy sequence in any $C([0,T); H^{m'})$ for any
$m'<m$. To prove our claim, we need another estimate on the difference of
operators $|\nabla|_\d $. Let us fix $\d,\e>0$. The symbol of the
difference is
\[
\symb (|\n|_\d - |\n|_\e) = |\xi| \int_0^1 ( e^{-\l \d|\xi|} - e^{-\l \e |\xi|}) d\l,
\]
which is bounded uniformly in  $\xi$ by $|\xi|^2 | \d- \e |$. Therefore we obtain the following bound:
\begin{equation}\label{e:diff}
\| (|\n|_\d - |\n|_\e) u \|_k \leq C |\d- \e | \| u\|_{k+2}.
\end{equation}
Writing the equation for the difference, we obtain
\[\begin{split}
(u_\e - u_\d)_t & = (u_\e - u_\d)|\n|_\e u_\e + u_\d(|\n|_\e - |\n|_\d) u_\e + u_\d|\n|_\d(u_\e - u_\d) \\
&- (|\n|_\e - |\n|_\d)(u_\e^2) - |\n|_\d( u_\e^2 - u_\d^2).
\end{split}\]
Testing with $(u_\e - u_\d)$ we further obtain
\[
\begin{split}
\frac12 \frac{d}{dt} |u_\e - u_\d|_2^2 &
= \! \int (u_\e - u_\d)^2|\n|_\e u_\e +  \int u_\d(u_\e - u_\d) (|\n|_\e - |\n|_\d) u_\e 
+  \int u_\d(u_\e - u_\d)|\n|_\d(u_\e - u_\d) \\
&-  \int (u_\e - u_\d)(|\n|_\e - |\n|_\d)(u_\e^2) -  \int (u_\e + u_\d)(u_\e - u_\d)|\n|_\d( u_\e - u_\d),
\end{split}
\]
where in the last term we swapped $|\n|_\d$ onto $( u_\e - u_\d)$. We
see that the third term $u_\d(|\n|_\e - |\n|_\d) u_\e (u_\e - u_\d)$ cancels with part of the last,
and we have, using the same trick as above:
\[
- \int u_\e (u_\e - u_\d)|\n|_\d( u_\e - u_\d) \leq -  \frac12 \int u_\e |\n|_\d( u_\e - u_\d)^2 
= -\frac12  \int  ( u_\e - u_\d)^2 |\n|_\d(u_\e)  \lesssim |u_\e - u_\d|_2^2,
\]
in view of the uniform bound on $u_\e$ in $H^m$. The rest of the terms
can be estimated using \eqref{e:diff}. Note that $m > d/2+1$ and being
an integer, $m\geq 2$ for any dimension $d\geq 1$. So, $H^m
\hookrightarrow H^2$. We thus have
\[
\begin{split}
\frac{d}{dt} |u_\e - u_\d|_2^2 & \leq C(  |u_\e - u_\d|_2^2 +  |\d- \e | |u_\e - u_\d|_2),
\end{split}
\]
where $C$ depends only on the initial conditions and other absolute
dimensional quantities, but on $\e$ and $\d$. Given that the solutions
start with the same initial condition, the Gr\"onwall inequality implies
\[
|u_\e (t) - u_\d(t) |_2 \leq C | \d- \e | (e^{Ct} - 1)
\]
for all $t<T$. This proves the claim.

So, the family $u_\d$ converges strongly to some $u$ in all $C([0,T);
H^{m'})$, $m'<m$. Moreover, $\p_t u_\d$ converges distributionally to
$\p_t u$, and in view of the uniform bound in $H^{m-1}$, it does so strongly
in any $H^{m'-1}$. This shows that the limit $u$ solves \eqref{e:main}
classically with initial condition $u_0$. Passing also to a weak limit
for a subsequence shows that $u\in C_w([0,T); H^m)$ which is the space of weakly
continuous $H^m$-valued functions. The argument to prove strong
continuity in $H^m$ now follows line by line that of \cite[Theorem
3.5]{mb-book} as we have all the same estimates. Finally, $u \in
C^1([0,T); H^{m-1})$ follows as before for $u_\d$, directly from the
equation.

Note that for the solution $u$ that we constructed, the maximum principle proved earlier for
$u_\d$ still holds. The  argument is the same, due to the positivity of the kernel.
\end{proof}

\subsection{A Beale-Kato-Majda criterion}

We now state the classical BKM criterion for our
model.  
\begin{theorem}[A Beale-Kato-Majda criterion]\label{thm:BKM}
  Suppose 
\[u\in C([0,T);H^m(\O^d)) \cap C^1([0,T); H^{m-1}(\O^d))\] 
is  a positive solution to \eqref{e:main}, where $m > d/2
  +1$. Suppose also that
\begin{equation}\label{weakBKM}
\int_0^T  | \nabla u (t) |_\infty \,dt < \infty.
\end{equation}
Then $u$ can be extended beyond time $T$ in the same regularity class.
\end{theorem}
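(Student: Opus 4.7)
The plan is to execute the classical Beale-Kato-Majda strategy: establish the a priori bound $\sup_{t<T}\|u(t)\|_m < \infty$ under \eqref{weakBKM}, then apply Theorem~\ref{thm:local} at a time $t_0 \in (0,T)$ close enough to $T$ to obtain an extension in the prescribed regularity class. Since $u$ is positive on $[0,T)$, the restarted Cauchy problem satisfies the hypotheses of the local well-posedness theorem.

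To derive the differential inequality, I repeat the $H^m$-energy computation of Step~3 of Theorem~\ref{thm:local}, carefully tracking the dependence on $|\nabla u|_\infty$ (for full rigor, the computation is first performed on a mollified or $\delta$-regularized approximation of $u$ and the resulting uniform bound is then passed to the limit). In the energy identity \eqref{eq:work}, the middle two terms combine, after the symmetrization trick $-a(a-b)\le -\tfrac12(a^2-b^2)$, into $-\tfrac12\int (|\nabla|u)(\partial^s u)^2\,dx$. Together with the $\alpha=0$ piece of the first sum, this yields the top-order bound $\tfrac12 \int (|\nabla| u)(\partial^s u)^2\,dx \leq \bigl| |\nabla| u\bigr|_\infty \|u\|_m^2$. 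The remaining commutator terms already carry their highest-order derivative in the $L^2$ slot, so a combination of Gagliardo-Nirenberg interpolation and the Kato-Ponce fractional Leibniz rule bounds them by $\bigl(|\nabla u|_\infty + \bigl| |\nabla| u\bigr|_\infty \bigr)\|u\|_m^2$.

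The crucial step is to trade the dangerous $\bigl||\nabla|u\bigr|_\infty$ for the $L^1_tL^\infty_x$-controlled quantity $|\nabla u|_\infty$. Writing $|\nabla|=R\cdot\nabla$ with $R$ the vector Riesz transform and invoking the standard logarithmic bound $|R f|_\infty \lesssim |f|_\infty \log(e + \|f\|_{H^s}/|f|_\infty)$, valid for any $s>d/2$ via a Littlewood-Paley decomposition, applied with $f=\nabla u$ and $s=m-1>d/2$, yields
\[
\bigl| |\nabla| u\bigr|_\infty \lesssim |\nabla u|_\infty \log\!\left( e + \tfrac{\|u\|_m}{|\nabla u|_\infty}\right) + |u|_2 .
\]
Inserting this into the previous bound gives
\[
\tfrac{d}{dt}\|u\|_m^2 \leq C\bigl(1 + |\nabla u|_\infty \log(e+\|u\|_m)\bigr)\|u\|_m^2,
\]
and a log-Gr\"onwall argument on $z(t)=\log(e+\|u(t)\|_m^2)$ produces $z(t) \leq z(0)\exp\bigl(C\int_0^t (1+|\nabla u(s)|_\infty)\,ds\bigr)$, finite on $[0,T]$ thanks to \eqref{weakBKM}. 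The main obstacle is exactly this trading step: because $|\nabla|$ is nonlocal of order one and the Riesz transforms fail to be $L^\infty$-bounded, the symmetrized top-order contribution is naturally $\bigl||\nabla|u\bigr|_\infty$ rather than $|\nabla u|_\infty$, forcing the use of a logarithmic interpolation and a log-Gr\"onwall closure in place of plain Gr\"onwall.
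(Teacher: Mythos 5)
Your proposal is correct and follows essentially the same route as the paper: an $H^m$ energy estimate whose terms are controlled by $|\nabla u|_\infty$ and $\bigl||\nabla|u\bigr|_\infty$, a logarithmic interpolation to trade $\bigl||\nabla|u\bigr|_\infty$ for $|\nabla u|_\infty\log(e+\|u\|_m)$, a double-exponential (log-Gr\"onwall) closure, and a restart via Theorem~\ref{thm:local}. The only difference is presentational: you invoke the Riesz-transform logarithmic bound with $f=\nabla u$, while the paper proves the equivalent Littlewood--Paley log-Sobolev lemma in terms of $\|u_{\geq 0}\|_{\dot B^1_{\infty,\infty}}$ (which also yields the variant criterion of Remark~\ref{rem:bkm-w2}).
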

\begin{remark} \label{rem:bkm-w2}
We will see that 
\(\int_0^T  | |\n| u (t) |_\infty dt < \infty\)
is also a  BKM criterion. 
\end{remark}
The proof relies on a version of the log-Sobolev inequality \`a la
Br\'ezis \cite{brezis_benilan} adapted to our setting.  In order to state it, let us first
recall some definitions. Let $u\in L^2$. Then $u$ admits a classical
Littlewood-Paley decomposition $u = \sum_{q = -\infty}^\infty
u_q$. Let us denote the large-scale part by $u_{<0} = \sum_{q<0} u_q$
and the small-scale part by $u_{\geq 0} = \sum_{q \geq 0} u_q$. Recall the
classical (homogeneous) Besov norm $\dot{B}^{s}_{r, \infty}$:
\[
\|u\|_{\dot{B}^{s}_{r, \infty}} = \sup_{q\in\Z} 2^{sq} | u_q|_r.
\]
\begin{lemma}[A log-Sobolev inequality]
    \begin{equation} \label{e:log-sob} |\n u|_\infty + ||\n| u|_\infty
    \lesssim |u|_2 + \| u_{\geq 0} \|_{{\dot B}^{1}_{\infty,\infty}} ( 1+ \log_+ \|u\|_m ) + 1.
\end{equation} 
\end{lemma}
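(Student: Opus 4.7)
The plan is to prove the inequality via a standard Littlewood--Paley decomposition, splitting frequencies into low, intermediate, and high blocks and optimizing the cutoff between the last two. Since both operators $\nabla$ and $|\nabla|$ are order-one Fourier multipliers with symbols of size $|\xi|$, they satisfy the same Bernstein-type estimates $|\nabla u_q|_\infty, \, ||\nabla| u_q|_\infty \lesssim 2^q |u_q|_\infty$, so the two terms on the left-hand side are treated identically. I will write out the argument for $|\nabla u|_\infty$.

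Write $\nabla u = \sum_{q<0} \nabla u_q + \sum_{0 \leq q \leq N} \nabla u_q + \sum_{q>N} \nabla u_q$ for a cutoff $N \in \N$ to be chosen. For the low-frequency part, Bernstein gives $|u_q|_\infty \lesssim 2^{qd/2}|u_q|_2 \lesssim 2^{qd/2}|u|_2$, so
\[
\bigl|\nabla u_{<0}\bigr|_\infty \lesssim \sum_{q<0} 2^{q(1+d/2)} |u|_2 \lesssim |u|_2.
\]
For the intermediate block, the definition of the Besov norm yields $|\nabla u_q|_\infty \lesssim 2^q |u_q|_\infty \leq \|u_{\geq 0}\|_{\dot B^1_{\infty,\infty}}$ for each $q\geq 0$, hence
\[
\Bigl|\sum_{0\leq q \leq N} \nabla u_q\Bigr|_\infty \lesssim (N+1)\, \|u_{\geq 0}\|_{\dot B^1_{\infty,\infty}}.
\]
For the high-frequency tail, Bernstein plus Cauchy--Schwarz in $q$ gives $|\nabla u_q|_\infty \lesssim 2^{q(1+d/2)}|u_q|_2$, and multiplying and dividing by $2^{qm}$ with $m > d/2+1$ produces a geometrically summable tail:
\[
\Bigl|\sum_{q>N}\nabla u_q\Bigr|_\infty \lesssim \sum_{q>N} 2^{q(1+d/2-m)} 2^{qm}|u_q|_2 \lesssim 2^{-\eps N}\|u\|_m,
\]
where $\eps = m-1-d/2 > 0$.

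Finally, I optimize $N$ by choosing the smallest integer such that $2^{-\eps N}\|u\|_m \leq 1$, i.e.\ $N \sim \log_+ \|u\|_m$. Combining the three bounds yields
\[
|\nabla u|_\infty \lesssim |u|_2 + (1 + \log_+\|u\|_m)\, \|u_{\geq 0}\|_{\dot B^1_{\infty,\infty}} + 1,
\]
and the identical argument for $||\nabla|u|_\infty$ completes the proof. I do not expect any serious obstacle: the only place to be careful is that $m > d/2+1$ is used as a strict inequality precisely to obtain a positive exponent $\eps$ in the geometric tail, which is what makes the final logarithmic dependence affordable; the $+1$ on the right-hand side just absorbs the degenerate situation $\|u\|_m \leq 1$ where the high-frequency cutoff contributes an $O(1)$ term on its own.
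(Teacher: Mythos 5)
Your proof is correct and follows essentially the same route as the paper: a Littlewood--Paley splitting into low, intermediate, and high frequency blocks, Bernstein inequalities for both $\nabla$ and $|\nabla|$, and an optimized cutoff that yields the logarithmic factor. The only cosmetic difference is that you pick the cutoff $N\sim\log_+\|u\|_m$ so that the high-frequency tail is $O(1)$ and absorbed by the $+1$, whereas the paper balances the two small-scale terms and then uses $-x\log x\le 1$; both give the stated bound.
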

\begin{proof}
  Bernstein's inequalities imply $| \n u_q |_\infty \sim 2^q
  |u_q|_\infty$, uniformly for all $q$. In particular, we get
\[
|\n u|_\infty + ||\n| u|_\infty \lesssim \sum_{q=-\infty}^\infty 2^q | u_q|_\infty.
\]
Then for $q <0$ we use another Bernstein inequality: $2^q |
u_q|_\infty \lesssim 2^{q(1+ \frac{d}{2})} |u_q|_2$. Clearly, $
|u_q|_2\leq |u|_2$ and thus $\sum_{q=-\infty}^{-1} 2^q | u_q|_\infty
\lesssim |u|_2$. For the small-scale component, we obtain, for every $Q
\geq 0$,
\[
\begin{split}
  \sum_{q=0}^\infty 2^q | u_q|_\infty & = \sum_{q=0}^Q 2^q | u_q|_\infty + \sum_{q=Q+1}^\infty 2^q | u_q|_\infty 
\leq Q\| u_{\geq 0} \|_{{\dot B}^{1}_{\infty,\infty}} +  \sum_{q=Q+1}^\infty 2^{q(1+ \frac{d}{2}-m)} 2^{qm} | u_q|_2 \\
  &\leq Q\| u_{\geq 0} \|_{{\dot B}^{1}_{\infty,\infty}} + 2^{-Q(m-\frac{d}{2}-1)} \| u\|_m.
\end{split}
\]
Minimizing the above over $Q$ (and recalling that $m>d/2+1$), the small-scale component is bounded by
$$\| u_{\geq 0} \|_{{\dot B}^{1}_{\infty,\infty}} \left( 1+ \log\frac{\|u\|_m}{\| u_{\geq 0} \|_{{\dot B}^{1}_{\infty,\infty}} } \right).$$
One observes next that $-x\log x\leq 1$ on $\R_+$ and that $\log y\leq \log_+ y$.
Thus the small-scale component is overall bounded by $1+\| u_{\geq 0} \|_{{\dot B}^{1}_{\infty,\infty}} ( 1+ \log_+ \|u\|_m )$, as claimed.
\end{proof}
We now turn to the proof of Theorem~\ref{thm:BKM}. 
\begin{proof}[Proof of Theorem~\ref{thm:BKM}]
Let $u$ be such that 
\begin{equation}\label{eq:BKM}
 \int_0^{+\infty} \| u_{\geq 0} (t) \|_{{\dot B}^{1}_{\infty,\infty}} dt < +\infty. 
\end{equation}
According to the classical Bernstein inequalities, $| \n u_q
|_\infty \sim 2^q |u_q|_\infty$, uniformly for all $q$, and by
continuity of the Littlewood-Paley projections, $| \n u_q |_\infty
\lesssim | \n u |_\infty$. Hence \eqref{weakBKM} implies
\eqref{eq:BKM}.  Similarly, since there exist $|\n|$-versions of
Bernstein's inequalities: $| |\n| u_q |_\infty \sim 2^q |u_q|_\infty$,
the condition in Remark~\ref{rem:bkm-w2} also implies \eqref{eq:BKM}.

Performing exactly the same estimates as on Step 3 of
  the proof of Theorem~\ref{thm:local} but now with $|\nabla|$ instead of $|\n_\d|$, we arrive at
  the following a priori bound
\begin{equation}\label{aprND}
\p_t \| u\|_m^2 \lesssim \|u\|_m^2 \sum_{i=1}^M |\n u|_\infty^{1-\mu_i} | |\n| u |_\infty^{\mu_i} ,
\end{equation}
where $M$ depends on $m$ and each of the $\mu_i$ satisfies  $0\leq \mu_i \leq 1$. Indeed, in \eqref{eq:work}, the estimate of the symmetrized term of highest order gives $\mu_i=1$.
The estimate of the first sum gives $\mu_i=|\alpha|/(m-1)$ for $0\leq \alpha <s$ and the specific terms from the last sum are dealt with simply with $\mu_i=0$.

Combining \eqref{aprND} with the log-Sobolev inequality~\eqref{e:log-sob}, we arrive at
\begin{equation}\label{aprND2}
  \p_t \| u\|_m^2 \lesssim |u|_2 \|u\|_m^2 + \| u_{\geq 0} \|_{{\dot B}^{1}_{\infty,\infty}} ( 1+ \log_+ \|u\|_m )\|u\|_m^2 + \|u\|_m^2.
\end{equation}
Applying Gr\"onwall's lemma twice, one gets a double-exponential estimate of the form:
$$\log\left(\frac{\log\|u(t)\|_m}{\log \|u_0\|_m + t +|u_0|_2t + \int_0^t  \| u_{\geq 0} \|_{{\dot B}^{1}_{\infty,\infty}} }\right) \leq \int_0^t  \| u_{\geq 0} \|_{{\dot B}^{1}_{\infty,\infty}}.$$
Theorem~\ref{thm:BKM} follows immediately.
\end{proof}

\subsection{From local to global through regularity}\label{s:weak}

We now study the question of the global existence and regularity of
positive weak solutions starting from arbitrary $L^\infty$-data.

Suppose that we are given a classical solution $u \in C([0,T);H^m)
\cap C^1([0,T); H^{m-1})$ on the torus $\T^d$ which is strictly
positive $u>0$. Let $T^*$ be its maximal time of existence. We
will show that $T^* = \infty$. Let us assume, on the contrary, that it is
finite. In what follows we apply the De Giorgi regularization
result of \cite{ccv} to our model. Since $u$ is a classical solution, the formal passage from the $u$-equation \eqref{e:main}
to the $w=u^2$-equation \eqref{eq:w-lin} holds true. The active kernel $k$ given by \eqref{eq:kernel} is 
symmetric with respect to $(x,y)$ and satisfies
\begin{equation}\label{}
\frac{\L(t)^{-1}}{|x-y|^{d+1}} \leq |k(t,x,y)| \leq \frac{\L(t)}{|x-y|^{d+1}}, 
\end{equation}
for all $x \neq y$, $t>0$, and
\[
\L(t) = C_d \max\{ |u(t)|_\infty , |u^{-1}(t)|_\infty\}.
\]
By the max/min principle, we see that $\L(t)$ can be replaced with
$\L(0) = \L$, uniformly for all time, and thus depends only on
$|u_0|_\infty$.  Equation \eqref{eq:w-lin} is exactly of the kind
studied in~\cite{ccv}.  It was natural in \cite{ccv}, in the context of an Euler-Lagrange
problem, to assume the finiteness of the global energy, i.e. $w \in L^2(\R^d)$.
The main technical result of \cite{ccv} however uses no
such assumption and only requires $w$ to have locally finite energy
(Corollary 3.2 of \cite{ccv} gives a global $L^\infty$ bound in terms
of the global $L^2$ norm, which in our case is not necessary as $w$
remains bounded by the maximum principle). So, \cite{ccv} applies
verbatim to our periodic solutions, unfolded on the whole space
$\R^d$. Specifically, the result states that there exists an $\a >0$
which depends only on $\L$ and $d$ such that for any $0<t_0<T^*$ we have $
w \in C^{\a,\a}_{x,t}(\T^d \times (t_0,T^*))$ with the bound
\[
\|w\|_{C^{\a,\a}_{x,t}(\T^d \times (t_0,T^*))} \leq C(t_0, |w_0|_\infty, \L, d).
\]
Next, the Schauder estimates for integro-differential equations of type
\eqref{eq:w-lin} recently obtained in \cite{iss} readily imply
\[
\| w \|_{C^{1+\frac{\a^2}2 ,\frac{\a^2}{2}}_{x,t}(\T^d \times [t_0+\e,T^*))}  
\leq C ( \e,\| w\|_{C^{0,\a}_{x,t}(\T^d \times [t_0,T^*))},\L,d)
\leq C(\e,t_0, |w_0|_\infty, \L, d).
\]
By the Beale-Kato-Majda criterion, $w$, and hence $u$, can be
extended smoothly beyond $T^*$, resulting in a contradiction.

A further application of the bootstrap argument of  \cite{iss} readily implies higher regularity bounds \eqref{e:instant} for $w$ and hence for $u$. We thus have established the following result.

\begin{theorem}[Global well-posedness]\label{t:global} Under the assumptions of \thm{thm:local}, the solution exists globally in time. Furthermore, the solution is regularized instantly and satisfies the bounds \eqref{e:instant}.
\end{theorem}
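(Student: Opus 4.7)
My approach is proof by contradiction. Let $T^*$ denote the maximal time of existence of the classical solution from \thm{thm:local}, assume $T^* < \infty$, and produce enough regularity on a subinterval $[t_0, T^*)$ to trigger the Beale-Kato-Majda criterion of \thm{thm:BKM}, contradicting the maximality of $T^*$.

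The central step is symmetrization: multiplying \eqref{e:main} by $u$ turns the right-hand side into the linear integro-differential operator \eqref{eq:w-lin} acting on $w=u^2$, with the symmetric kernel \eqref{eq:kernel}. Here strict positivity of $u$ and the max/min principle from \thm{thm:local} are essential: they keep $u(x,t)$ sandwiched between $\min u_0 > 0$ and $\max u_0$ for every $t \in [0,T^*)$, so the kernel satisfies the time-uniform ellipticity bound
\[
\frac{\Lambda^{-1}}{|x-y|^{d+1}} \leq k(t,x,y) \leq \frac{\Lambda}{|x-y|^{d+1}},
\]
with $\Lambda = C_d \max\{|u_0|_\infty, |u_0^{-1}|_\infty\}$. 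I would then apply the De Giorgi-type theorem of Caffarelli-Chan-Vasseur \cite{ccv}, unfolding the periodic solution onto $\R^d$, to obtain for every $t_0 \in (0,T^*)$ a H\"older bound $\|w\|_{C^{\alpha,\alpha}_{x,t}(\T^d \times (t_0,T^*))} \leq C(t_0,|w_0|_\infty,\Lambda,d)$ for some $\alpha=\alpha(\Lambda,d) \in (0,1)$. Positivity of $u$ transfers this H\"older regularity from $w$ to $u$ itself. Feeding this into the Schauder estimates of \cite{iss} (see also \cite{tianling,mp14}) upgrades the control on $[t_0+\varepsilon, T^*)$ to a $C^{1+\alpha^2/2,\alpha^2/2}_{x,t}$ bound, which in particular yields a uniform bound on $\|\nabla u\|_{L^\infty}$ and hence triggers BKM, producing the contradiction. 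A standard bootstrap iterating the Schauder theory on the now-smoother kernel $k$ then delivers the full hierarchy \eqref{e:instant}.

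The main obstacle is structural rather than technical: the whole scheme hinges on positivity. Without it, the kernel $k$ could vanish or change sign, \eqref{eq:w-lin} would cease to be uniformly elliptic (in fact, for negative data the time-reversibility (TR) forces backward-heat behavior), and neither CCV nor the Schauder theory would apply. A secondary point requiring care is that \cite{ccv} is originally formulated for globally $L^2$ solutions: one has to invoke the local-in-space version of their De Giorgi argument, which is valid since $w$ is bounded by the maximum principle, so that the unfolding of a bounded periodic $w$ onto $\R^d$ is admissible.
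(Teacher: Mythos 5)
Your proposal is correct and follows essentially the same route as the paper: symmetrize to the equation for $w=u^2$ with the uniformly elliptic kernel \eqref{eq:kernel} (uniformity via the max/min principle and positivity), apply the De Giorgi result of \cite{ccv} on the unfolded periodic solution, upgrade via the Schauder estimates of \cite{iss}, conclude by the BKM criterion that the solution extends past the assumed finite $T^*$, and bootstrap for \eqref{e:instant}. Your remarks on the role of positivity and on the locally-finite-energy version of \cite{ccv} match the paper's own discussion.
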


\subsection{Weak solutions from positive bounded data}\label{s:weak2}

The bounds \eqref{e:instant} depend on the initial data only through the minimum and maximum value.
This allows us to construct weak solutions from arbitrary positive data in $L^\infty$ and for which
similar regularization properties will hold. However it is not obvious how to restore the initial data
and one should be cautious of the topology used for the limit $t\to0$.

\begin{theorem}[Global weak solution]\label{t:weak} For any initial data $u_0\in L^\infty(\T^d)$, $u_0>0$, there exists a global weak solution to \eqref{e:main} in the class
$$L^\infty(\R_+ \times \T^d) \cap L^2(\R_+; \dot H^{1/2}) \cap C^0(\R_+; L^2).$$ Its initial value~$u_0$ is realized in the sense of the $L^\infty$ weak$^*$ limit and in the strong $L^2$ sense.
The energy is conserved, the momentum $\int_{\T^d} u(x,t)dx$ is continuous on $\R_+$ and \eqref{eq:ML} is satisfied for any $(t,t')\in\R_+^2$.
Furthermore, $u$ satisfies the instant regularization estimates \eqref{e:instant} and for all $t>0$, the original equation \eqref{e:main} is satisfied in the classical sense.
\end{theorem}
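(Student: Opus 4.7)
The strategy is to construct the weak solution as a limit of classical solutions starting from mollified data. Let $\rho_\eps$ be a standard positive smooth mollifier and set $u_0^\eps = u_0 * \rho_\eps$. Since $u_0 \in L^\infty(\T^d)$ is bounded from below by a positive constant, one has $\min u_0 \leq u_0^\eps \leq \max u_0$ uniformly in $\eps$, together with $u_0^\eps \to u_0$ strongly in every $L^p(\T^d)$ with $p<\infty$ and in $L^\infty$ weak$^*$. Theorem~\ref{t:global} then yields a global smooth positive solution $u^\eps$ starting from $u_0^\eps$, to which the estimates \eqref{e:instant} apply.

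The crucial feature is that the constants in \eqref{e:instant} depend on the data only through $\min u_0^\eps$ and $\max u_0^\eps$, hence are uniform in $\eps$. Together with the energy conservation $|u^\eps(t)|_2=|u_0^\eps|_2$ and with (ML), which provides a uniform $L^2(\R_+;\dot H^{1/2})$ bound, Arzel\`a-Ascoli and diagonal extraction produce a subsequence converging to some $u$ in $C^\infty_{\loc}((0,\infty)\times\T^d)$. The limit solves \eqref{e:main} classically for $t>0$, inherits \eqref{e:instant} and the pointwise bound $|u(t)|_\infty \leq |u_0|_\infty$, and lies in $L^2(\R_+;\dot H^{1/2})$ by Fatou. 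Passing to the limit in (E) and (ML) on any compact $[s,t]\subset(0,\infty)$ yields $|u(t)|_2=|u_0|_2$ for every $t>0$.

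The main obstacle, as flagged in \S\ref{s:weak2}, is the recovery of the initial datum: the bounds \eqref{e:instant} degenerate at $t_0=0$ and the approximants $u^\eps$ are not weakly equicontinuous there. The resolution relies on a symmetrized weak form of \eqref{e:main2}. For any $\varphi\in C^\infty(\T^d)$, swapping $x\leftrightarrow y$ in the integrand yields
\[
2\int_{\T^d}\p_t u\,\varphi\,dx = -\iint K(y-x)(u(y)-u(x))(\varphi(y)-\varphi(x))u(y)\,dxdy + \iint K(y-x)(u(y)-u(x))^2\varphi(y)\,dxdy,
\]
which involves only first-order differences of $u$ and is bounded, by Cauchy-Schwarz and the Gagliardo-Sobolevskii characterization of $\dot H^{1/2}$, by $C\bigl(|u_0|_\infty\|\varphi\|_{\dot H^{1/2}}\|u(t)\|_{\dot H^{1/2}} + |\varphi|_\infty\|u(t)\|_{\dot H^{1/2}}^2\bigr)$. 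Integrating on $[s,t]$ and invoking $\|u\|_{\dot H^{1/2}}^2\in L^1(\R_+)$ shows that $t\mapsto \int u(t)\varphi\,dx$ is Cauchy as $t\to 0^+$; hence $u(t)\stackrel{*}{\rightharpoonup}\tilde u_0$ in $L^\infty$ for some $\tilde u_0$ with $|\tilde u_0|_\infty\leq|u_0|_\infty$. Comparing the space-time weak formulations of $u^\eps$ and of $u$ against test functions $\varphi\in C^\infty_c([0,T)\times\T^d)$---in which the boundary term of $u^\eps$ converges to $\int u_0\,\varphi(0,\cdot)\,dx$ by strong $L^1$ convergence of $u_0^\eps$, while the bilinear bulk term converges by combining strong interior convergence on each $[t_0,T]$ with the uniform $\dot H^{1/2}$-based control on $[0,t_0]$---forces the identification $\tilde u_0=u_0$.

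Once the identification is in hand, energy conservation $|u(t)|_2=|u_0|_2$ for all $t\geq 0$ together with the polarization identity $|u(t)-u_0|_2^2=2|u_0|_2^2-2\int u(t)u_0\,dx$ and the weak$^*$-$L^\infty$ convergence applied to $\varphi=u_0\in L^\infty$ upgrade the limit to strong $L^2$ convergence $u(t)\to u_0$, whence $u\in C^0(\R_+;L^2)$. Continuity of the momentum $t\mapsto\int_{\T^d}u(t)dx$ on $\R_+$ and the identity \eqref{eq:ML} extend to $t=0$ from their restrictions to $[s,t]\subset(0,\infty)$ thanks to the monotonicity granted by (ML), the absolute continuity of $\|u\|_{\dot H^{1/2}}^2$, and the strong $L^2$ convergence at the initial time.
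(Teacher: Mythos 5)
Your construction (mollify the data, use the fact that the constants in \eqref{e:instant} depend only on $\min u_0$ and $\max u_0$, Arzel\`a--Ascoli plus a diagonal argument, classical limit for $t>0$, energy and (ML) bounds) matches the paper, and your observation that $t\mapsto\int u(t)\varphi\,dx$ has a limit as $t\to0^+$ --- because the symmetrized weak form tested against $\varphi$ has time derivative bounded by $C\bigl(\|\varphi\|_{\dot H^{1/2}}\|u(t)\|_{\dot H^{1/2}}+|\varphi|_\infty\|u(t)\|_{\dot H^{1/2}}^2\bigr)\in L^1_{loc}$ --- is correct. The gap is in the identification $\tilde u_0=u_0$. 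You claim that comparing the space-time weak formulations of $u^\eps$ and $u$ forces the identification because "the bilinear bulk term converges by combining strong interior convergence on each $[t_0,T]$ with the uniform $\dot H^{1/2}$-based control on $[0,t_0]$." But the weak formulation of \eqref{e:main} contains, besides the genuinely bilinear term $\iint u_\eps(x)(\varphi(x)-\varphi(y))(u_\eps(y)-u_\eps(x))K$, the term $\iint \varphi(x)(u_\eps(y)-u_\eps(x))^2K$, which is \emph{quadratic} in the increments of $u_\eps$. On $[0,t_0]$ the uniform $L^2_t\dot H^{1/2}$ bound gives only uniform \emph{boundedness} of its contribution, namely $C\int_0^{t_0}\|u_\eps(s)\|_{\dot H^{1/2}}^2ds=C\bigl(\int u_\eps(t_0)-\int(u_0)_\eps\bigr)$ by (ML), and this is not uniformly small in $\eps$ as $t_0\to0$: that would require equicontinuity of the momentum at $t=0$ along the family $u_\eps$, which is exactly the "surprising difficulty" the paper flags. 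A concentration of $\int_0^{t_0}\|u_\eps\|_{\dot H^{1/2}}^2$ at $t=0$ would survive the limit as a nonnegative defect, so your comparison argument only yields the one-sided conclusion $\tilde u_0\geq u_0$ (this is the paper's inequality \eqref{e:wu2}), not equality; and since all your subsequent steps (strong $L^2$ continuity via polarization, momentum continuity, \eqref{eq:ML} at $t=0$) feed on the identification, they are not yet justified.

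The paper closes this gap with an ingredient you are missing: it first passes to the limit in the weak formulation of the equation for $w=u^2$, \eqref{eq:w-lin}, whose symmetrized weak form contains \emph{only} terms bilinear in increments, $(w_\eps(y)-w_\eps(x))(\phi(x)-\phi(y))k$, controlled by Cauchy--Schwarz uniformly near $t=0$; this gives $u(t)^2\rightharpoonup u_0^2$ weak$^*$ as $t\to0$. Then an elementary lemma (if $u_n\rightharpoonup u'$ and $u_n^2\rightharpoonup u_0^2$ weak$^*$, with $u_n$ bounded below by a positive constant, then $u_0\geq u'$, proved by expanding $\int(u_n-u_0)^2\phi\geq0$) supplies the reverse inequality $u_0\geq\tilde u_0$, and the two inequalities together identify $\tilde u_0=u_0$. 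Note also the paper's cautionary Rademacher example showing that $u(t)^2\rightharpoonup u_0^2$ alone does not imply $u(t)\rightharpoonup u_0$: both one-sided pieces are genuinely needed. Unless you can independently rule out concentration of the $\dot H^{1/2}$ norm of the approximants near $t=0$, your proposal needs this $w$-equation step (or an equivalent substitute) to be complete.
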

\begin{remark}
If uniqueness was to fail in \thm{t:weak}, it could only do so at $t=0$. However, the continuity of the momentum at $t=0$ prevents any concentration of the $\dot H^{1/2}$ norm in our weak solution.
\end{remark}

\begin{proof}[Proof of \thm{t:weak}]
Let $u_0 \in L^\infty$ be such a positive initial condition. We start the construction by taking standard mollifications $(u_0)_\e$ of $u_0$. In view of \thm{t:global}, there exists a global classical
solution~$u_\e$ from each of those mollified initial conditions. Let us fix $T>0$. In accordance with \eqref{e:instant}, the
family $\{u_\e\}$ satisfies uniform regularity starting from any  $t_0>0$. Invoquing the Arzel\`{a}-Ascoli compactness theorem together with Cantor's diagonal argument to successively reduce the value of $t_0$, we can pass to the limit and find a classical solution on the interval $(0,T]$. In addition, as we already pointed out in the introduction, the energy equality on the whole time interval $[0,T]$ combines with the momentum law to ensure that~$u_\e$ belongs to $ L^\infty([0,T) \times \T^d) \cap L^2([0,T); H^{1/2})$ uniformly. As $L^\infty\cap H^{1/2}$ is an algebra, $w_\e=u_\e^2$ enjoys the same uniform property.
Passing to the weak limit in the Sobolev space, we conclude that 
\begin{equation}\label{e:uw-unif}
u,w \in  L^\infty([0,T) \times \T^d) \cap L^2([0,T); H^{1/2}).
\end{equation}
For all $t>0$, since we also have a limit in the classical sense, $w(t) = u(t)^2$. The only remaining problem is to restore the initial data and a weak formulation of the equation near $t=0$.
Indeed, once a solution is constructed on $[0,T]$, one can invoque  \thm{t:global} one last time, starting with the smooth initial data $u(T)$ and therefore claim the existence of a global solution on $\R_+$.

\bigskip
Let us restore the initial data for $w$ first. Let us write \eqref{eq:w-lin} in the weak form, for the smoothed solutions: for any $\phi \in C^\infty([0,T) \times \T^d)$,
\begin{multline}\label{e:ww}
\int_{\T^d} w_\e(x,t) \phi(x,t) dx - \int_{\T^d} w_\e(x,0) \phi(x,0) dx - \int_0^t \int_{\T^d}  w_\e(x,t) \p_t \phi(x,s) dx ds \\
= \frac{1}{2}\int_0^t \int_{\T^d} k(s,x,y) (w_\e(y,s) - w_\e(x,s))(\phi(x,s) - \phi(y,s)) dx dy ds.
\end{multline}
Let us write \eqref{e:ww} as $A_\e - B_\e - C_\e = D_\e$ with the respective designation of each term. As we pass to the limit $\e \ra 0$, we trivially have $A_\e \ra A_0$.
The convergence $B_\e \ra B_0$ holds because $(u_0)_\e \ra u_0$ strongly in $L^2$ and hence $w_\e(t=0) = (u_0)_\e^2 \ra w_0$ in $L^1$ and thus weakly.
The convergence $C_\e \ra C_0$ results from the uniform $L^\infty$ bound on $w_\e$ (in an arbitrary small neighborhood of $t=0$) joined with the uniform convergence on the rest of the time interval.
Finally, one splits the last term as:
\begin{equation}\label{e:D}
D_\e = \int_\d^t + \int_0^\d = D_{\e,\d}' + D_{\e,\d}''.
\end{equation}
We have $D_{\e,\d}' \ra D_{0,\d}'$ by classical convergence and, by Cauchy-Schwarz and~\eqref{e:uw-unif},
\begin{equation}\label{e:D2}
|D_{\e,\d}''| \leq \| w_\e\|_{L^2([0,\d);H^{1/2})} \|\phi\|_{L^2([0,\d);H^{1/2})} \leq C \sqrt{\d}
\end{equation}
uniformly in $\e$. This shows that $D_\e \ra D_0$. We have thus proved that \eqref{e:ww} is satisfied in the limit as $\e \ra 0$, i.e. for $w$ itself. Taking then $\phi$ independent of $t$ and passing to $t\ra 0$ shows that $w(t) \rightharpoonup w_0$ weakly$^*$ in $L^\infty$. 

\bigskip
At this point let us make a cautionary remark because $u(t)^2 \rightharpoonup u_0^2$ does not imply $u(t) \rightharpoonup u_0$ in general. A simple example is provided by the sequence $u_n = 1+ \frac12 r_n$, where $r_n(x)=\operatorname{sign}(\sin 2^n\pi x)$ are Rademacher functions. Then $u_n>0$, $u_n \rightharpoonup 1$ and yet $u_n^2 \rightharpoonup \frac54 \neq 1^2$. A progressively mollified sequence $(u_n)_{1/n^n}$ would provide a similar counter-example in the class $C^\infty$, as in our case. However, one can observe on this example that the function $\sqrt{5/4}$, whose square is the limit of squares, dominates the limit of $u_n$ itself, which is $1$. This is true in general.

\begin{lemma}
Suppose that a sequence of functions $\{u_n\} \ss L^\infty$, bounded away from zero, enjoys both limits $u_n \rightharpoonup u'$ and $u_n^2 \rightharpoonup u_0^2$ in the weak$^*$ topology. Then $u_0 \geq u'$.
\end{lemma}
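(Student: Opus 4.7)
The plan is to use the elementary identity $(u_n - u')^2 \geq 0$ together with the two weak$^*$ convergences. Since the sequence $\{u_n\}$ is bounded in $L^\infty$ and $u_n \rightharpoonup u'$ in the weak$^*$ topology, the limit $u'$ belongs to $L^\infty(\T^d)$, so $u' \phi \in L^1$ for any test function $\phi \in L^1$. Expanding the square gives
\[
u_n^2 - 2 u' u_n + (u')^2 \geq 0.
\]
Pick any nonnegative $\phi \in L^1(\T^d)$. Since $u' \phi \in L^1$, the weak$^*$ convergence $u_n \rightharpoonup u'$ tested against $u'\phi$ yields $\int u_n u' \phi \,dx \to \int (u')^2 \phi \,dx$, while the weak$^*$ convergence $u_n^2 \rightharpoonup u_0^2$ tested against $\phi$ gives $\int u_n^2 \phi \,dx \to \int u_0^2 \phi \,dx$.

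Passing to the limit in the integrated inequality,
\[
\int_{\T^d} u_0^2 \phi \,dx - 2 \int_{\T^d} (u')^2 \phi \,dx + \int_{\T^d} (u')^2 \phi \,dx \geq 0,
\]
that is, $\int_{\T^d} (u_0^2 - (u')^2) \phi \,dx \geq 0$ for every nonnegative $\phi \in L^1(\T^d)$. A standard duality argument then yields $u_0^2 \geq (u')^2$ almost everywhere.

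It remains to extract the square root with the correct sign. Because $u_n \geq c > 0$ for some constant $c$ independent of $n$, the weak$^*$ limit satisfies $u' \geq c$ almost everywhere (testing against characteristic functions of measurable sets), so in particular $u'\geq 0$. By the same token, $u_0^2 \geq c^2 > 0$, and the convention of the lemma (consistent with its intended use, where $u_0$ arises as the positive function whose square is the weak$^*$ limit of $u_n^2$) is to take $u_0 \geq 0$ as the pointwise positive square root. Taking square roots in $u_0^2 \geq (u')^2$ then produces $u_0 \geq u'$ a.e., which is the desired conclusion. The only subtle point in the argument is the convergence of the cross term $\int u_n u' \phi \,dx$, which requires using $u'\phi \in L^1$ as a valid test function for the weak$^*$ convergence of $u_n$; everything else is just expansion and duality.
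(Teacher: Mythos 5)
Your proof is correct, and it is the same basic mechanism as the paper's --- expand a nonnegative square, integrate against an arbitrary nonnegative $\phi \in L^1$, and pass to the limit using both weak$^*$ convergences --- but you expand a different square, which changes the endgame slightly. The paper expands $(u_n - u_0)^2$, so the limit reads $\int (u_0^2 - 2u'u_0 + u_0^2)\phi = 2\int u_0(u_0-u')\phi \geq 0$, and the conclusion $u_0 \geq u'$ follows in one step from $u_0>0$ alone, with no square-root extraction. You expand $(u_n - u')^2$ instead, which gives $u_0^2 \geq (u')^2$ a.e.\ and then requires the extra step of taking square roots; for that you correctly invoke the uniform lower bound $u_n \geq c > 0$ to get $u' \geq c$ (hence $|u'| = u'$) and the convention that $u_0$ denotes the nonnegative root of the limit of $u_n^2$. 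All the individual steps are sound --- in particular the cross term $\int u_n u'\phi$ converges because $u' \in L^\infty$ makes $u'\phi$ an admissible $L^1$ test function, which mirrors the paper's use of $u_0\phi \in L^1$. The trade-off is minor: the paper's choice of square is a bit more economical (positivity of $u_0$ suffices), while yours has the merit of making explicit the sign convention on $u_0$ that the lemma statement and the paper's proof leave implicit.
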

\begin{proof}
The proof of this lemma is simple. Let $\phi >0$ be a test function. Then trivially
\[
\int (u_n - u_0)^2 \phi dx \geq 0,
\]
for all $n$. Let us expand,
\[
\int (u_n - u_0)^2 \phi dx = \int (u_n^2 -2 u_n u_0 + u_0^2) \phi \ra \int (u_0^2 - 2 u' u_0 + u_0^2)\phi = 2\int u_0(u_0 - u') \phi  \geq 0.
\]
Since $u_0 >0$ and the above holds for an arbitrary $\phi>0$ in $L^1$, the lemma follows.
\end{proof}

\bigskip
Let us go back to restoring the intial condition for $u$.
Reverting to the original  equation~\eqref{e:main3}, its weak formulation for the smooth sequence reads, with test functions independent of $t$:
\begin{multline}\label{e:wu}
\int_{\T^d} u_\e(x,t) \phi(x) dx - \int_{\T^d} u_\e(x,0) \phi(x) dx = \\
\frac12 \int_0^t \int_{\T^d}  \phi(x) (u_\e(y,s)-u_\e(x,s))^2 K_{\per}(y-x) dy dx ds \\
+ \frac12 \int_0^t \int_{\T^d} u_\e(x,s) (\phi(x) - \phi(y))(u_\e(y,s) - u_\e(x,s)) K_{\per}(y-x) dy dx ds.
\end{multline}
Passing to the limit on the left-hand side and in the last integral on the right presents no difficulty as one can use estimates similar to \eqref{e:D}-\eqref{e:D2}.

However there are no a-priori bounds that guarantee the smallness near the time $t=0$ of the first integral on the right-hand side. Specifically, a possible concentration of the $H^{1/2}$ norm near $t=0$ (or equivalently by \eqref{eq:ML}, an initial discontinuity in the first momentum) could prevent \eqref{e:wu} from withstanding the limit.

Using only the positivity  of the first integrand of the right-hand side,
we obtain instead, for all $\phi \geq 0$:
\begin{multline}\label{e:wu2}
\int_{\T^d} u(x,t) \phi(x) dx - \int_{\T^d} u_0(x) \phi(x) dx \geq \\
\frac12 \int_0^t \int_{\T^d} u(x,s) (\phi(x) - \phi(y))(u(y,s) - u(x,s)) K_{\per}(y-x) dy dx ds.
\end{multline}
In the limit $t\ra 0$, the right-hand side of the previous inequality vanishes by~\eqref{e:D2}, and hence, any weak$^*$ limit of a subsequence of $(u(t))_{t>0}$ would converge to a function $u'$ satisfying $u'\geq u_0$ a.e.
On the other hand, by the lemma above, $u_0 \geq u'$. This proves that the weak$^\ast$ limit $u(t) \rightharpoonup u_0$ holds as $t\ra 0$ for any subsequence and therefore also
as a function of the continuous time parameter.
In particular, testing this weak$^\ast$ limit with $\phi\equiv 1$ ensures that the momentum $\int_{\T^d} u(x,t)dx$ is continuous at $t=0$.

\bigskip
Let us now recall that the $L^2$ norm is also continuous at time zero and therefore overall preserved. Indeed, one can for example use $\phi\equiv 1$ as a test function in the weak$^\ast$ limit $w(t) \rightharpoonup w_0$ or directly notice that $|u_\e(t)|_2 = |(u_0)_\e|_2$ for $\e>0$ and that this identity passes to the limit $\e\ra0$, on the left-hand side
because of the uniform convergence at time $t>0$ and on the right-hand side because it is a standard property of mollified sequences. Next, as $u_0\in L^2\subset L^1$ is an admissible test function
for the weak$^\ast$ convergence $u(t) \rightharpoonup u_0$, one gets
$$|u(t)-u(0)|_2^2 = 2|u_0|^2-2\int_{\T^d} u(x,t)u_0(x)dx \to 0.$$
Therefore $u\in C(\R_+;L^2)$.

\bigskip
Finally, to restore the weak formulation of the original equation, we first write it on a time interval $[t_0,t]$ with $t_0>0$, where it is also satisfied in the classical sense:
\begin{multline}\label{e:wu3}
\int_{\T^d} u(x,t) \phi(x,t) dx - \int_{\T^d} u(x,t_0) \phi(x,t_0) dx - \int_{t_0}^t \int_{\T^d} u(x,s) \p_t \phi(x,s) dx ds= \\
\frac12 \int_{t_0}^t \int_{\T^d} \phi(x,s) (u(y,s)-u(x,s))^2 K_{\per}(y-x)  dy dx ds \qquad\qquad\\
+ \frac12 \int_{t_0}^t \int_{\T^d}  u(x,s) (\phi(x,s) - \phi(y,s))(u(y,s) - u(x,s)) K_{\per}(y-x) dy dx ds.
\end{multline}
Passing to the limit as $t_0\ra0$ is now within reach. On the left-hand side, the $L^2$ continuity ensures the convergence of the middle term while the $L^\infty$ bound tackles the third term.
On the right-hand side, the last integral is controled in the same fashion as~\eqref{e:D2} because of~\eqref{e:uw-unif}. The troublesome first term on the right-hand side is
bounded effortlessly; for any $0<t_1<t_0$:
\begin{equation}\label{e:wu4}
\left| \int_{t_1}^{t_0} \int_{\T^d} \phi(x,s) (u(y,s)-u(x,s))^2 K_{\per}(y-x)  dy dx ds \right| \leq C \int_{t_1}^{t_0} \|u(s)\|_{\dot{H}^{1/2}}^2 ds.
\end{equation}
Applying~\eqref{eq:ML} on $[t_1,t_0]$, where the equation is already satisfied in a classical sense, the right-hand side equals $C\int_{\T^d} u(x,t_0) - u(x,t_1) dx$.
As the momentum is continuous, the right-hand side is arbitrarily small as $t_0\to0$, uniformly for $t_1\in[0,t_0]$.
One can thus first let $t_1\to 0$ in \eqref{e:wu4} and then pass to the limit $t_0\to0$ in~\eqref{e:wu3}.
Finally, taking $\phi\equiv1$ in the weak formulation of the equation shows that \eqref{eq:ML} still holds at $t=0$.
This finishes the complete construction of a weak solution on $[0,\infty)$ and the proof of \thm{t:weak}. 
\end{proof}

\subsection{Blowup in finite time for some smooth negative data}\label{s:blowup}

In view of the time reversibily property (TR) mentioned page \pageref{TR},  if
$u$ is a positive solution to \eqref{e:main}, then $-u(t^*-t)$ is a
negative solution. Thus starting with positive data $u_0 \in
L^\infty(\T^d) \bs C(\T^d)$ we obtain a solution $u$ which becomes
$C^\infty$ instantaneously. Then $-u(t^*)$ serves as negative initial data that develops a singularity at time $t= t^*$. 

\begin{corollary}[Finite time blow-up] For any $t^*>0$, there exists a negative initial condition  $u_0 \in C^\infty(\T^d)$, $u_0<0$ and there exists a classical solution to \eqref{e:main} that develops into a discontinuous  solution at time $t^*$ i.e. $u(t^*) \in L^\infty(\T^d) \bs C(\T^d)$.
\end{corollary}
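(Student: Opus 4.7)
The plan is to invoke the time reversibility property (TR) of \eqref{e:main} together with the weak existence result from \thm{t:weak}. The idea is that (TR) turns an instantaneously regularizing solution from rough positive data into a classical negative solution that loses regularity at a prescribed finite time.

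First, given the target blow-up time $t^*>0$, choose any bounded positive initial datum $u_0 \in L^\infty(\T^d) \setminus C(\T^d)$, $u_0 > 0$; a simple choice is $u_0 = 1 + \tfrac12 \chi_A$ for a measurable set $A \subset \T^d$ which is neither null nor co-null. By \thm{t:weak}, this produces a global weak solution $u$ that satisfies the instant regularization bounds \eqref{e:instant}; in particular $u(\cdot, t) \in C^\infty(\T^d)$ for every $t > 0$, and by the maximum principle $u(\cdot, t) > 0$ throughout. Thus $u(\cdot, t^*)$ is a strictly positive $C^\infty$ function.

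Second, define
\[
v(x,t) := -u(x,\, t^* - t), \qquad t \in [0, t^*].
\]
By (TR), $v$ solves \eqref{e:main} on $[0,t^*]$. For $t \in [0, t^*)$ the argument $s = t^*-t$ lies in $(0, t^*]$, so $v(\cdot, t) = -u(\cdot, s)$ belongs to $C^\infty(\T^d)$ and is strictly negative. Hence $v$ is a classical negative solution on the half-open interval $[0, t^*)$, starting from the smooth negative initial datum $v_0 = -u(\cdot, t^*) \in C^\infty(\T^d)$, $v_0 < 0$.

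Finally, at the endpoint $t = t^*$ one has $v(\cdot, t^*) = -u_0 \in L^\infty(\T^d) \setminus C(\T^d)$, which is exactly the claimed blow-up profile. There is no real obstacle here, since all the analytic work has been absorbed into \thm{t:weak}: the construction of a weak solution from merely $L^\infty$ positive data, which then smooths out immediately, is precisely the ingredient that (TR) converts into a finite-time loss of regularity for negative data.
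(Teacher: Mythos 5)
Your argument is correct and is essentially identical to the paper's own proof: start from positive discontinuous $L^\infty$ data, use the instant $C^\infty$ smoothing of the weak solution from Theorem~\ref{t:weak}, and apply time reversibility (TR) to produce a smooth negative initial datum whose classical solution becomes discontinuous exactly at time $t^*$. No gaps.
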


\section{Long-time behavior}
\label{sec:ltb}

In this section we will show that the long-time dynamics of the model converges to a constant state consistent with the conservation of energy, namely,
\[
u(t,x) \underset{t\to+\infty}{\longrightarrow} \frac{\|u_0\|_{L^2}}{\sqrt{|\T^d|}}\cdotp
\]
As the solution is squeezed by the maximum and minimum principles, it is expected that such behavior would be a consequence of a steady decay of the amplitude. We will show indeed in Lemma~\ref{l:A} that the amplitude tends to zero exponentially fast. However, this first property alone still leaves room for non-trivial oscillations like a non-vanishing $|\n u|_\infty$. We will exclude such residual oscillations using the method that Constantin and Vicol \cite{cv} developed recently to prove, among other things, the global regularity  for the critical SQG equation.
In short, the long-time dynamics of our model is a convergence to a constant state, both at large and small scales.

Subsequently, we use the following notations:
$$m(t)=\min_{x\in\T^d} u(x,t), \quad M(t)=\max_{x\in\T^d} u(x,t), \quad A(t)=M(t)-m(t)$$
and $m_0=m(0)$, $M_0 = M(0)$, $A_0 = A(0)$.
\begin{lemma}[Exponential decay of space oscillations]\label{l:A} 
  Let $u \in L^\infty(\T^d \times [0,\infty))$ be a weak solution to
  \eqref{e:main2} with a positive initial condition as stipulated in \thm{t:weak}. Then $A(t) \leq A_0 e^{-c m_0 t}$ holds for all
  $t>0$ with some $c>0$ independent of $u$.
\end{lemma}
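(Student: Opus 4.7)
The plan is to derive a Gr\"onwall-type differential inequality $A'(t) \leq -c\, m_0\, A(t)$ for almost every $t>0$ and then integrate. By \thm{t:weak} and the instant regularization bounds \eqref{e:instant}, the weak solution is classical (in fact $C^\infty$) for all $t>0$, so $M(t)$ and $m(t)$ are locally Lipschitz functions of time. Danskin's envelope theorem then furnishes, at almost every $t>0$, a maximizer $x_M(t)$ and a minimizer $x_m(t)$ such that $M'(t)=\p_t u(t,x_M(t))$ and $m'(t)=\p_t u(t,x_m(t))$.

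The next step is to evaluate $\p_t u$ at an extremum using the symmetric integral form \eqref{e:main3}. At $x_M$ the factor $(u(y)-M)u(y)=-(M-u(y))u(y)$ has a favorable sign, and the minimum principle $u\geq m(t)$ lets us pull $m(t)$ out:
\[
M'(t)=-\int_{\T^d}K_{\per}(x_M-y)(M-u(y))\,u(y)\,dy \leq -m(t)\int_{\T^d}K_{\per}(x_M-y)(M-u(y))\,dy,
\]
and the symmetric computation at $x_m$ produces
\[
m'(t) \geq m(t)\int_{\T^d}K_{\per}(x_m-y)(u(y)-m)\,dy.
\]
Subtracting the two estimates and exploiting the fact that the periodic kernel $K_{\per}$ admits a uniform lower bound $K_{\per}\geq c_{\per}>0$ on the torus, together with the trivial identity $\int_{\T^d}(M-u)\,dy+\int_{\T^d}(u-m)\,dy=A(t)|\T^d|$, yields
\[
A'(t) \leq -c_{\per}\,|\T^d|\, m(t)\, A(t).
\]

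Finally, the minimum principle $m(t)\geq m_0$ turns this into a linear Gr\"onwall inequality, which I would integrate from a small $t_0>0$ to $t$, then send $t_0\to 0^+$. The last limit uses $\limsup_{t_0\to 0^+}A(t_0)\leq A_0$, which follows from the uniform sandwich $\min u_{0,\e}\leq u_\e(t,\cdot)\leq \max u_{0,\e}$ satisfied by the approximating smooth sequence of \thm{t:weak}. The main conceptual ingredient, and the point where the argument would fail on $\R^d$, is the uniform positivity of $K_{\per}$ afforded by compactness; the main technical subtlety is the rigorous use of Danskin's theorem to differentiate the Lipschitz envelopes $M$ and $m$ in time, which is classical for continuous-parameter families over a compact set.
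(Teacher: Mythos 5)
Your proof is correct, and it follows the same overall strategy as the paper: evaluate $\p_t u$ at a maximizer and a minimizer, use the sign of the integrand together with the minimum principle $u\geq m_0$ to extract an elliptic term, deduce $A'\leq -c\,m_0 A$, and conclude by Gr\"onwall. The differences are in the implementation. The paper works with the unfolded whole-space representation \eqref{e:main2}: it restricts the integrals to the region $\{|y-\bar x|\geq 1,\ |y-\underline x|\geq 1\}$, replaces the kernel by $\min(K(y-\bar x),K(y-\underline x))$ (legitimate because the integrands have a sign), and gets the constant $c$ from the tail integral $\int_{|y|\geq 1+\pi\sqrt d} c_d(|y|+\pi\sqrt d)^{-(d+1)}dy$; it also sidesteps any differentiability discussion for $M$ and $m$ by interpreting $M'$, $m'$ in the viscosity sense of Crandall--Lions. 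You instead stay on the torus and use the uniform lower bound $K_{\per}\geq c_{\per}>0$ (immediate from the $j=0$ term of the periodized kernel on the fundamental cell) together with the identity $\int(M-u)+\int(u-m)=A|\T^d|$, which decouples the two extremum inequalities and makes the constant completely explicit; you justify differentiating $M$, $m$ via Rademacher/Danskin, which is fine since the solution is smooth for $t>0$, though note the envelope theorem really gives one-sided derivative bounds over the set of maximizers --- harmless here since your pointwise estimate holds at every maximizer. Both routes use periodicity in an essential way (the paper through the bounded distance between the unfolded extrema, you through the uniform positivity of $K_{\per}$). A small plus of your write-up is the explicit treatment of the passage $t_0\to 0^+$ for merely $L^\infty$ data via the sandwich $m_0\leq u_\e\leq M_0$ for the approximating sequence of \thm{t:weak}, a point the paper leaves implicit.
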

\begin{proof}
  This proof relies on an idea from \cite{fim}.  Let $\bar{x}(t)$
  denote a point where $M(t)= u(\bar{x}(t),t)$ and let
  $\underline{x}(t)$ denote a point where $m(t) = u(\underline{x}(t),t)$. 
Let us unfold the $2\pi$-periodic solution on $\R^d$ ; without loss of generality, we can assume that $\bar{x},\underline{x} \in [-\pi,\pi]^d$. 
The gradient $\nabla u(\cdot,t)$ vanishes both at $\bar{x}(t)$ and $\underline{x}(t)$.
Then we have, for instance in the viscosity sense of Crandall-Lions,
\begin{align*}
M'(t) & \le \int u(y)(u (y) -u(\bar{x})) K(y-\bar{x})dy \\
& \le m_0 \int_{|y-\bar{x}|\ge 1, |y-\underline{x}|\ge 1} (u (y) -u(\bar{x})) K(y-\bar{x})dy \\
& \le m_0 \int_{|y-\bar{x}|\ge 1, |y-\underline{x}|\ge 1} (u (y) -u(\bar{x})) 
\min \left(K(y-\bar{x}),K(y-\underline{x})\right) dy.
\end{align*}
We dropped the reference to time in the right-hand sides for readibility.
Similarly, we have 
\[ m'(t) \ge m_0 \int_{|y-\bar{x}|\ge 1, |y-\underline{x}|\ge 1} (u(y)- u (\underline{x})) 
\min \left(K(y-\bar{x}),K(y-\underline{x})\right) dy. \] 
This implies that 
$$A'(t) \le - m_0 A(t) \int_{|y-\bar{x}|\ge 1, |y-\underline{x}|\ge 1} 
\min \left(K(y-\bar{x}),K(y-\underline{x})\right) dy  \le -c m_0 A(t) $$
where 
\( c = \int_{|y|\ge 1 +\pi\sqrt{d}} \frac{c_d dy}{(|y|+\pi\sqrt{d})^{d+1}} .\)
Lemma~\ref{l:A} follows by Gr\"onwall's lemma.
\end{proof}
\begin{theorem}[Asymptotic behavior at all scales]\label{thm:asympt}
  Let $u \in L^\infty(\T^d \times [0,\infty))$ be a weak solution to
  \eqref{e:main} with a positive initial condition as stipulated in \thm{t:weak}.  Then $m(t)$ is increasing, $M(t)$ is decreasing
  and both $A(t)$ and $|\n u(t)|_\infty$ decay to zero exponentially
  fast. Consequently, the solution converges to a constant at an
  exponential rate.
\end{theorem}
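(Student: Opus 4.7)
The monotonicity of $m(t)$ and $M(t)$ is immediate from the max/min principles proved in \thm{thm:local}, extended to weak solutions via the instantaneous regularization of \thm{t:weak}. The exponential decay of $A(t)$ is \lem{l:A}. Conservation of the $L^2$ energy gives $c_\infty^2|\T^d|=\int_{\T^d} u^2(t)\,dx$, which is sandwiched between $m(t)\int u(t)\,dx$ and $M(t)\int u(t)\,dx$, while $m(t)\le \frac{1}{|\T^d|}\int u(t)\,dx\le M(t)$. Hence $m(t)\le c_\infty\le M(t)$ for every $t$, so $|u(t)-c_\infty|_\infty \le A(t) \le A_0\, e^{-c m_0 t}$, and $u(t)\to c_\infty$ uniformly at an exponential rate.

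The nontrivial remaining point is the exponential decay of $|\nabla u(t)|_\infty$. My plan is to combine the uniform convergence above with a uniform-in-time $C^k$ bound, via interpolation. For the latter, I would restart the equation at any time $s\ge t_0/2$ with the (smooth) datum $u(s)$ and apply \eqref{e:instant} on a fixed short interval $(0,\tau)$. Since $m_0\le\min u(s)\le\max u(s)\le M_0$ uniformly in $s$, the ellipticity constant $\Lambda$ of the symmetric kernel \eqref{eq:kernel} is controlled by $(m_0,M_0)$ alone; inspection of \S\ref{s:weak} shows that the De Giorgi estimate of \cite{ccv} and the Schauder estimate of \cite{iss} both deliver constants depending only on $\Lambda$, $|w_0|_\infty$, $\tau$, and $d$, hence independent of $s$. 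This yields $\sup_{t\ge t_0}\|u(t)\|_{C^k(\T^d)}\le C_k(t_0,m_0,M_0,d)$ for every integer $k\ge 1$. The Landau--Kolmogorov inequality, applied to $u(t)-c_\infty$, then gives, for any integer $k\ge 2$,
\[
|\nabla u(t)|_\infty \le C_{d,k}\,\|u(t)-c_\infty\|_\infty^{1-1/k}\,\|u(t)\|_{C^k}^{1/k} \le C'_k\, A_0^{1-1/k}\, e^{-c m_0 (1-1/k)\, t},
\]
that is, exponential decay at any rate strictly below $cm_0$. Combined with the uniform convergence $u(t)\to c_\infty$, this also upgrades the convergence to any $C^{1}$-type norm.

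An alternative, closer to the Constantin--Vicol strategy flagged in the introduction, would be to differentiate \eqref{e:main2} and evaluate the equation for $|\nabla u|^2$ at its spatial maximum $\bar x(t)$. Positivity of $u$ and the pointwise bound $|\nabla u(y)|\le|\nabla u(\bar x)|$ extract a dissipative contribution $-c m_0\int|\nabla u(\bar x)-\nabla u(y)|^2/|\bar x-y|^{d+1}\,dy$, bounded from below by $c' m_0\,|\nabla u(\bar x)|^2$ via a nonlinear maximum principle \`a la \cite{cv} (using the uniform $L^q$ bound on $\nabla u$ from the $C^k$ estimate above), together with a stretching term of order $CA(t)\,|\nabla u(\bar x)|^2$ coming from the $(u(y)-u(\bar x))\, v(\bar x)\cdot v(y)$ piece. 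The resulting Riccati-type differential inequality $\tfrac{d}{dt}|\nabla u|_\infty^2 \le (CA(t)-c'm_0)\,|\nabla u|_\infty^2$ then yields the decay by Gr\"onwall, since $\int_0^\infty A(s)\,ds<\infty$.

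The principal technical hurdle is ensuring the $T$-uniformity of the $C^k$ bound in Step~1 (implicit in \cite{ccv,iss} but not spelled out by the constant list in \eqref{e:instant}); in the alternative route the analogous hurdle is the nonlinear maximum principle adapted to our non-local, non-constant-coefficient operator. Once either is in place the asymptotic statement closes.
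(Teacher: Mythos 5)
Your proposal is correct, but your primary route differs from the paper's. The paper does not use uniform-in-time $C^k$ bounds plus interpolation at all: after \lem{l:A} it differentiates \eqref{e:main2}, evaluates at a maximum point of $|\n u|$, and runs a Constantin--Vicol-type argument in which the amplitude is built into both sides — the stretching term is split at a scale $\r$ and absorbed into the dissipation, giving $\p_t |\n u|_\infty^2 + \tfrac{m_0}{2} I \leq c (A/m_0)^{2/3}|\n u|_\infty^3$, while the nonlinear maximum principle is sharpened to $I \gtrsim |\n u|_\infty^3/A$; once $A(t)$ has decayed below $\sim m_0$ (time $T^*\simeq \tfrac1{m_0}\log_+(A_0/Cm_0)$) this closes into a Riccati inequality whose solution decays like $e^{-cm_0 t}$. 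So your ``alternative'' sketch is essentially the paper's proof (note that your lower bound $m_0 I \gtrsim m_0|\n u|_\infty^2$ follows from the paper's $I\gtrsim |\n u|_\infty^3/A$ together with $A\leq \pi\sqrt d\, |\n u|_\infty$, and your stretching bound $CA^{2/3}|\n u|_\infty^2$ likewise, once $|\n u|_\infty$ is a priori bounded for $t\geq t_0$). Your main route — restarting at time $s$, invoking \eqref{e:instant} on a fixed unit window, and interpolating $|\n u|_\infty \lesssim |u-c_\infty|_\infty^{1-1/k}\|u\|_{C^k}^{1/k}$ with $|u-c_\infty|_\infty\leq A(t)$ — is also valid, and the ``hurdle'' you flag is in fact already covered: the constant in \eqref{e:instant}, applied with fixed $t_0$ and $T$ to the solution restarted from $u(s)$ (legitimate by the uniqueness in \thm{thm:local}), depends only on $d,k$ and $\min u(s)\geq m_0$, $\max u(s)\leq M_0$ by the max/min principles, hence is uniform in $s$; no re-inspection of \cite{ccv} or \cite{iss} beyond what \thm{t:global} states is needed. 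What each approach buys: yours is shorter given the regularity machinery already proved and cleanly identifies the limit constant $c_\infty$, but yields only rates $(1-1/k)cm_0$ strictly below $cm_0$ and leans on the full strength of the Schauder bootstrap; the paper's argument works directly at the PDE level with only $C^1$-regularity of the solution, produces the sharp-in-this-argument rate $e^{-cm_0t}$, and gives the explicit transition time $T^*$ quoted in the remark following the theorem.
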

\begin{remark}Combining this asymptotic with~\eqref{eq:ML} allows us to compute the total increase of momentum as the defect in the Cauchy-Schwartz inequality for $u_0$ :
$$\int_0^\infty \|u(t)\|_{\dot H^{1/2}}^2 dt = \sqrt{|\T^d|}|u_0|_2-\int_{\T^d}u_0(x)dx.$$
Lemma~\ref{l:A} indicates that the stabilization of the largest scales starts right away. The following proof will establish that the time-scale of the transitory regime that precedes the
stabilization of the lowest scales does not exceed $T^*\simeq\frac{1}{m_0}\log_+(A_0/Cm_0)$.
\end{remark}
\begin{proof}
Let us take the gradient of the integral form \eqref{e:main2} and
multiply by $\n u$. After elementary manipulations we obtain pointwise (the integrals being understood as principal values):
\[
\begin{split}
\frac12 \p_t |\n u(x,t)|^2 &+ \n u(x)\cdot \int (\n u(x) - \n u(x+z)) u(x+z) K(z)dz \\
& = \n u(x) \cdot \int  (u(x+z) - u(x)) \n u(x+z) K(z)dz.
\end{split}
\]
Inside the first integral let us use the identity
\begin{equation}\label{}
\begin{split}
\n u(x)\cdot (\n u(x) - \n u(x+z)) u(x+z) &= \frac12 [ |\n u(x)|^2 - |\n u(x+z)|^2 ]u(x+z)\\
& + \frac12 |\n u(x) - \n u(x+z)|^2 u(x+z).
\end{split}
\end{equation}
Assuming that $x$ is the point of the maximum of $|\n u|$, the first term
on the right-hand side of this last identity is non-negative, while in the
second term we can simply use the bound from below $u(x+z)\geq m$.
We thus obtain the estimate
\[
\begin{split}
\p_t |\n u(x,t)|^2 &+ m_0\int  |\n u(x+z) - \n u(x)|^2 K(z)dz \\
& \leq  2\int \n u(x) \cdot  \n u(x+z) \left(u(x+z) - u(x)\right) K(z) dz.
\end{split}
\]
Let us denote the integrals respectively by $I$ and $J$:
\begin{equation}\label{e:bal}
\begin{split}
\p_t |\n u(x,t)|^2 &+ m_0 I \leq J.
\end{split}
\end{equation}

\bigskip
We split the integral $J$ depending on whether $|z|<\r$ or $|z|>\r$
and write $J = J_{<\r}+ J_{>\r}$. We estimate $J_{>\r}$ after rewriting it in the following form
\[
\begin{split}
J_{>\r} & =   2 \n u(x) \cdot\int_{|z| > \r}  \n_z (u(x+z)-u(x)) (u(x+z) - u(x)) K(z)dz \\
& =  \n u(x) \cdot \int_{|z| > \r}   \n_z (u(x+z)-u(x))^2  K(z)dz\\
& = (d+1) \n u(x) \cdot \int_{|z| > \r} z  (u(x+z)-u(x))^2  \frac{c_d dz}{|z|^{d+3}} 
+  \n u(x) \cdot\int_{|z| = \r} \nu_z (u(x+z)-u(x))^2  \frac{c_d d\s (z) }{\rho^{d+1}}\cdotp
\end{split}
\]
Thus,
\[
|J_{>\r}| \leq c_2  |\n u|_\infty \frac{A^2}{\r^2}.
\]
As to $J_{<\r}$, we use the first order Taylor formula for the increment of $u$:
\[
\begin{split}
J_{<\r} & = 2  \int_0^1 \operatorname{p.v.}\left(\int_{|z|<\r} \n u(x) \cdot  \n u(x+z) \n u(x+\l z) \cdot z K(z) dz\right) d\l \\
& =  2 \int_0^1 \int_{|z|<\r} \n u(x) \cdot  \n u(x+z) (\n u(x+\l z) - \n u(x)) \cdot z K(z) dz d\l \\
&\qquad+ 2\int_0^1 \int_{|z|<\r} \n u(x) \cdot  (\n u(x+z) - \n u(x)) \n u(x) \cdot z K(z) dz d\l\\
& = J_{<\r}^1 + J_{<\r}^2.
\end{split}
\]
We get an upper bound of $J_{<\r}^1$ by Cauchy-Schwarz:
\[
\begin{split}
| J_{<\r}^1 | & \leq 2|\n u|_\infty^2  \int_0^1 \left(\int_{|z|<\l \r} \frac{|\n u(x+ z) - \n u(x)|}{|z|^d} c_d dz\right) d\l \\
& \leq 2|\n u|_\infty^2 \int_{|z|< \r} |\n u(x+ z) - \n u(x)|\sqrt{K(z)}  \cdot \frac{\sqrt{c_d}}{|z|^{\frac{d-1}{2}}} dz \\
&\leq  2|\n u|_\infty^2 \sqrt{c_d\omega_d I \r} \leq \frac{m_0}{4} I + c_3 \frac{|\n u|_\infty^4}{m_0} \r.
\end{split}
\]
The estimate for $J_{<\r}^2$ is completely analogous (without the $\l$-integral). Thus,
\[
J_{<\r} \leq \frac{m_0}{2} I + c_4  \frac{|\n u|_\infty^4}{m_0} \r.
\]
Incorporating the estimates already obtained back into \eqref{e:bal}, we arrive at
\[  \p_t |\n u(x,t)|^2 + \frac{m_0}{2} I \leq  c_2  |\n u|_\infty \frac{A^2}{\r^2}+c_4  \frac{|\n u|_\infty^4}{m_0} \r .\]
The choice $\rho= (m_0A^2)^{1/3}/|\n u|_\infty$ optimizes the right-hand side:
\begin{equation}\label{e:bal2}
  \p_t |\n u(x,t)|^2 + \frac{m_0}{2} I \leq  c_5  \left(\frac{A}{m_0}\right)^{\frac23} |\n u|_\infty^3 .
\end{equation}

\bigskip
Next, we estimate $I$ from below using an argument analogous to that
of P.~Constantin and V.~Vicol \cite{cv} but in which we incorporate the amplitude. For an arbitrary $r>0$, we write
\begin{gather*}
I = \int |\n u(x) - \n u(x+z)|^2 K(z)dz \geq \int_{|z|>r} |\n u(x) - \n u(x+z)|^2 K(z)dz  \\
I \geq  \int_{|z|>r} |\n u(x)|^2 K(z)dz  - 2 \n u(x) \cdot \int_{|z|>r} \n u(x+z) K(z)dz
\end{gather*}
therefore
\[
\begin{split}
 I& \geq c_6 \frac{|\n u(x)|^2}{r} - 2 \n u(x) \cdot \int_{|z|>r} \frac{ \n_z( u(x+z) - u(x)) }{|z|^{d+1}} c_ddz \\
  &= c_6 \frac{|\n u(x)|^2}{r} -2(d+1) \n u(x) \cdot \int_{|z|>r} z (u(x+z) - u(x)) \frac{c_d dz}{|z|^{d+3}} \\
& \qquad +2 \n u(x) \cdot \int_{|z|=r} \nu_z (u(x+z) - u(x))\frac{c_d d\s (z)}{r^{d+1}}\cdotp
\end{split}
\]
It follows $$ I \geq c_6 \frac{|\n u(x)|^2}{r} - c_7|\n u(x)| \frac{A}{r^2}$$
and choosing $r = \frac{2c_7 A}{c_6 |\n u(x)|}$ provides
\begin{equation}\label{I}
I \geq 2 c_8 \frac{| \n u(x,t)|^3}{A}\cdotp
\end{equation}

\bigskip
As $x$ is a point of maximum of $\nabla u(t)$, combining \eqref{e:bal2} with \eqref{I} yields
\begin{equation}\label{e:bal3}
\p_t |\n u|_\infty^2 +  c_8 \frac{ m_0 }{A} | \n u |_\infty^3 \leq c_5  \left(\frac{A}{m_0}\right)^{\frac23} |\n u|_\infty^3 .
\end{equation}
  In view of \lem{l:A}, there exists a time $T^*\simeq\frac{1}{m_0}\log_+(A_0/Cm_0)$ such that for $t \ge T^*$, 
\[ c_5  \left(\frac{A}{m_0}\right)^{\frac23} \le  c_8 \frac{ m_0 }{2A}\cdotp\]
This implies that for $t \ge T^*$,
\[\p_t |\n u|_\infty^2 +  c_8 \frac{ m_0 }{2A} | \n u |_\infty^3 \leq 0 \] 
or equivalently,
\[\p_t |\n u|_\infty +  c_8 \frac{ m_0 }{4A} | \n u |_\infty^2 \leq 0. \] 
Using the precise estimate from \lem{l:A}, we further get 
\[ \p_t |\n u|_\infty +  \frac{c_8 m_0 }{4A_0} e^{cm_0t} | \n u |_\infty^2 \leq 0.\]
This finally implies for $t\geq T^*$
\[ |\nabla u(t)|_\infty \le \frac{|\nabla u (T^*)|_\infty}{1 + \frac{c_9|\nabla u (T^*)|_\infty}{A_0} (e^{cm_0 t} - e^{cm_0 T^*})}
= \frac{|\nabla u (T^*)|_\infty e^{-cm_0 t}} {1 + \frac{c_9|\nabla u (T^*)|_\infty}{A_0} (1-e^{-cm_0 (t-T^*)}) } \cdotp
\]
The proof of Theorem~\ref{thm:asympt} is now complete. 
\end{proof}
 
\section{Local well-posedness in analytic classes} 
\label{par:analytic}

In the last section we plan to explore numerically what happens, in terms of existence or blow-up, to solutions when the initial condition is unsigned or even negative.
Since for such data we do not even have a local existence result in Sobolev spaces, we find it necessary to prove a generic local existence result, at least in analytic classes.
Since our numerical data obviously has a finite Fourier spectrum, it will ensure the minimal solid ground required to run the numerical scheme.

\medskip
We rely on a not so well-known fixed point theorem by Nishida and Nirenberg, \cite{nishida}. Let us recall it.
Let us define the following spaces, for $\r>0$,
\begin{equation}\label{}
X_\rho = \{ u: |u|_\rho =  \sum_{k \in \Z^3} e^{|k|\rho} | \hat{u}(k) | < \infty \},
\end{equation}
\begin{equation}\label{}
Y_\rho = \{ u:  \| u \|_{\rho} = | u |_\rho + | \n u |_\rho \}.
\end{equation}
Notice that $X_\rho$ is a Banach algebra. Also notice that for any $\rho'>\rho$ one has
\begin{equation}\label{D-bound}
| \n u |_\rho = | |\n| u |_{\r} \leq \frac{C}{\rho'-\rho} | u |_{\rho'},
\end{equation}
for some absolute $C>0$. 
\begin{theorem}[Nishida and Nirenberg, \cite{nishida}]
Suppose a functional $F=F(u)$ 
satisfies the following condition: $F: Y_{\r'} \ra Y_\r$ for all $\r'>\r$, and for any $R>0$ there is a $C_R$ such that for all $\|u,u_1,u_2\|_{\r'} <R$ and for all $\r<\r'$ the following holds:
\begin{itemize}
\item[(i)] $\| F(u) \|_{\r} \leq \frac{C_R}{\r'-\r} \|u\|_{\r'}$,
\item[(ii)] $\| F(u_1) - F(u_2) \|_{\r} \leq \frac{C_R}{\r'-\r} \|u_1-u_2\|_{\r'}$.
\end{itemize}
Then the Cauchy problem
\begin{equation}\label{e:nishida}
\begin{cases}
\frac{d u }{dt}= F(u),\\
u(0)  = u_0
\end{cases}
\end{equation}
is locally well-posed in the following sense: for any $u_0 \in Y_{\r'}$ and any $\r<\r'$, there exists $T$ and a unique solution $u \in C^1((-T,T); Y_\r)$ to \eqref{e:nishida}.
\end{theorem}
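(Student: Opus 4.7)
The theorem is a classical abstract Cauchy--Kowalewski-type statement, and the plan is to set up a Picard iteration on the scale $\{Y_\sigma\}_{\sigma<\rho'}$ with a linearly shrinking radius of analyticity, following Ovsyannikov's scheme. I would first recast \eqref{e:nishida} as the integral fixed-point equation $u=\Phi(u)$ with
\begin{equation*}
\Phi(u)(t)=u_0+\int_0^t F(u(\tau))\,d\tau,
\end{equation*}
then fix $\rho<\rho'$, introduce an auxiliary parameter $a>0$ (to be tuned large), set $T=(\rho'-\rho)/a$, and let $\sigma(t)=\rho'-a|t|$ interpolate linearly between $\sigma(0)=\rho'$ and $\sigma(\pm T)=\rho$. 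The work would take place in the space
\begin{equation*}
\mathcal{Z}_T=\Bigl\{u\in C([-T,T];Y_\rho)\ :\ u(t)\in Y_\sigma\text{ for every }\sigma<\sigma(t)\Bigr\}
\end{equation*}
equipped with the Ovsyannikov-type weighted norm
\begin{equation*}
\|u\|_{\mathcal{Z}_T}=\sup_{|t|\le T}\ \sup_{0<\sigma<\sigma(t)}\bigl(\sigma(t)-\sigma\bigr)\,\|u(t)\|_\sigma,
\end{equation*}
in which the constant curve $t\mapsto u_0$ belongs with $\|u_0\|_{\mathcal{Z}_T}\le\rho'\,\|u_0\|_{\rho'}$.

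The heart of the matter is the estimate of $\|F(u(\tau))\|_\sigma$ at an admissible radius $\sigma<\sigma(\tau)$, which I would obtain by invoking~(i) at the intermediate radius $\sigma^\star=\tfrac12(\sigma+\sigma(\tau))$ that satisfies $\sigma^\star-\sigma=\sigma(\tau)-\sigma^\star=\tfrac12(\sigma(\tau)-\sigma)$. Writing $R_0$ for the radius of the $\mathcal{Z}_T$-ball we work in and taking $R$ large enough to absorb the consequent bound $\|u(\tau)\|_{\sigma^\star}\le 2R_0/(\sigma(\tau)-\sigma)$, hypothesis~(i) yields
\begin{equation*}
\|F(u(\tau))\|_\sigma\,\le\,\frac{C_R}{\sigma^\star-\sigma}\,\|u(\tau)\|_{\sigma^\star}\,\le\,\frac{4\,C_R\,\|u\|_{\mathcal{Z}_T}}{(\sigma(\tau)-\sigma)^{2}}.
\end{equation*}
The decisive point is that this upgraded quadratic singularity is integrable in time,
\begin{equation*}
\int_0^{|t|}\frac{d\tau}{(\sigma(\tau)-\sigma)^{2}}\,\le\,\frac{1}{a\,(\sigma(t)-\sigma)},
\end{equation*}
and the residual $(\sigma(t)-\sigma)^{-1}$ is precisely cancelled by the weight in the norm, whence
\begin{equation*}
\|\Phi(u)-u_0\|_{\mathcal{Z}_T}\,\le\,\frac{4\,C_R}{a}\,\|u\|_{\mathcal{Z}_T}.
\end{equation*}

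Choosing $a$ large enough that $4C_R/a<1/2$ makes $\Phi$ stabilize the ball $\{u:\|u\|_{\mathcal{Z}_T}\le 2\rho'\|u_0\|_{\rho'}\}$, and hypothesis~(ii) produces an identical bound for $\|\Phi(u_1)-\Phi(u_2)\|_{\mathcal{Z}_T}$, so $\Phi$ is a strict contraction. Banach's fixed point theorem supplies a unique $u\in\mathcal{Z}_T$; on any sub-interval $|t|<T'<T$ one has $\sigma(t)>\rho$, so the integral equation combined with~(i) upgrades $u$ to the class $C^1((-T',T');Y_\rho)$. Uniqueness within the larger class $C^1((-T,T);Y_\rho)$ follows from a routine bootstrap: any classical solution takes values in some $Y_{\sigma'}$ with $\sigma'>\rho$ in order for $F(u(t))\in Y_\rho$ to be well-defined in the first place, and this forces membership in $\mathcal{Z}_T$ after possibly shrinking $T$ further.

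The main obstacle is the familiar tension of Cauchy--Kowalewski-type arguments: hypothesis~(i) loses one derivative per iteration, producing on the naive route a factor $(\sigma(\tau)-\sigma)^{-1}$ that fails to be integrable in $\tau$. Splitting the loss symmetrically through the intermediate radius $\sigma^\star$ is precisely the device that sharpens the singularity to the borderline integrable order $-2$, at which point the weight built into the $\mathcal{Z}_T$-norm and the freedom to take $a$ large close the argument. A secondary technical point is that the constant $C_R$ in~(i) depends on the $Y_{\sigma^\star}$-ball radius, which the weighted norm controls only up to a factor $(\sigma(\tau)-\sigma^\star)^{-1}$; one handles this by fixing $R$ in terms of $R_0$ and $a$ up front and, if needed, further localizing to $\sigma\le\sigma(\tau)-\delta$ before letting $\delta\to 0$ in a final passage to the limit.
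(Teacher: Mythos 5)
First, note that the paper does not prove this statement at all: it is quoted as an external result of Nishida--Nirenberg and then applied to $F(u)=u|\nabla|u-|\nabla|(u^2)$, so the only fair comparison is with the classical proof of the abstract Cauchy--Kowalewski theorem.

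Your overall strategy (Picard iteration on a scale $Y_\sigma$ with radius shrinking linearly in time) is the right one, but there is a genuine gap at the central estimate. Hypotheses (i)--(ii) are only available for arguments lying in a ball $\|u\|_{\rho'}<R$, with a constant $C_R$ about which nothing is assumed except that it exists for each $R$; in the intended application $C_R$ grows like $R$. Your Ovsyannikov norm $\sup_t\sup_{\sigma<\sigma(t)}(\sigma(t)-\sigma)\|u(t)\|_\sigma$ deliberately allows $\|u(\tau)\|_{\sigma^\star}$ to be of size $2R_0/(\sigma(\tau)-\sigma)$, i.e.\ unbounded as $\sigma\to\sigma(\tau)$, and that is exactly the radius at which you invoke (i). ``Taking $R$ large enough to absorb'' this is not legitimate: $R$ must be fixed before $C_R$ is, and since $C_R$ is uncontrolled in $R$ (linear in $R$ already for the quadratic $F$ of the paper), feeding $R\sim R_0/(\sigma(\tau)-\sigma)$ into (i) degrades your $(\sigma(\tau)-\sigma)^{-2}$ singularity to $(\sigma(\tau)-\sigma)^{-3}$, whose time integral is no longer cancelled by the single weight, and the contraction collapses. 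The proposed repair --- restricting to $\sigma\le\sigma(\tau)-\delta$ and letting $\delta\to0$ --- only reproduces the problem, because the constants then depend on $\delta$ through $C_{R(\delta)}$. The classical proofs resolve precisely this tension differently: Nirenberg and Nishida keep every iterate \emph{uniformly inside a fixed ball} $\|u_k(t)\|_\sigma\le R$ on nested cones $a_k|t|<\rho'-\sigma$ with $a_k$ decreasing to a positive limit, and obtain geometric convergence of the differences by distributing the loss of radius over $k$ intermediate steps (a Stirling-type bookkeeping), while fixed-point versions (e.g.\ Safonov) use weights designed so that the $Y_\sigma$-norms themselves stay bounded by $R$ on the ball where the contraction is run. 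Your argument needs one of these devices; as written, the key step applies the hypotheses outside their domain of validity. A secondary soft spot is the uniqueness claim in $C^1((-T,T);Y_\rho)$: the assertion that any such solution automatically takes values in some $Y_{\sigma'}$, $\sigma'>\rho$, does not follow from the stated hypotheses, and uniqueness in these theorems is normally obtained within the scale (or for the constructed class of solutions), again using (ii) at two distinct radii.
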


In our case, $F(u) =  u |\n| u - |\n|(u^2)$. Let us fix an $R>0$ and assume that $\|u\|_{\r'} <R$ and $\rho < \rho'$. Using the algebra property of $Y_\r$ and \eqref{D-bound} we estimate (absolute constants may change from line to line) 
\begin{equation*}\label{}
| F(u) |_{\rho} \leq | u |_{\rho} | |\n| u |_{\rho}  + \frac{C}{\rho'-\rho} |u^2|_{\r'} \leq \frac{CR}{\rho'-\rho} |u|_{\r'}\leq \frac{CR}{\rho'-\rho} \|u\|_{\r'},
\end{equation*}
and
\begin{align*}\label{}
| \n F(u) |_{\rho} &\leq |\n (u |\n| u)|_\r + 2 ||\n|(u \n u)|_\r \\&\leq \frac{C}{\rho'-\rho} (|u|_{\r'} ||\n| u|_{\r'} + |u|_{\r'} |\n u|_{\r'})  \leq \frac{C}{\rho'-\rho} \|u\|_{\r'}^2 \leq \frac{CR}{\rho'-\rho} \|u\|_{\r'}.
\end{align*}
Thus, $\| F(u) \|_{\r} \leq \frac{CR}{\r'-\r} \|u\|_{\r'}$.  The Lischitzness condition is verified analogously.

\section{Numerical simulations and perspectives}
\label{par:numerics}

In this last section, let us illustrate briefly with some numerical simulations the main analytical results of our paper, in dimension $d=1$. 

\subsection{Smooth positive data}\label{s:SPD}

Positive smooth data satisfy the assumptions of our main Theorem. We computed the solution using a direct integration of the singular-integral form of \eqref{e:main2}.
Given a time step $\tau>0$ and a spatial discretization sequence $(x_i)_{1\leq i\leq N}$ of $\T^1\simeq[-\pi,\pi]$ with a uniform mesh size $\delta=x_{j+1}-x_j$,
the value $U_{i,k}$ representing $u(k\tau,x_i)$ is computed with a forward numerical scheme:
\begin{equation}\label{numscheme}
U_{k+1,i}=U_{k,i}+\tau\delta \sum_{j\neq i} \frac{(U_{k,j}-U_{k,i})U_{k,j}}{4 \sin^2(\frac{x_j-x_i}{2})}\cdotp
\end{equation}
The code runs well as long as one makes sure to respect a CFL ($\tau$ is small enough) and that each term stays well within the limit of exact computer arithmetic (to compute
accurately the compensations i.e.~the principal value).

Figure \ref{SimA}  shows the evolution from $u_0(x)=2+\sin(x)+\frac{3}{10}\cos(5x)$ and the profile of the solution for various time-slices. As expected in view of Theorems \ref{t:global} and \ref{thm:asympt},
the solution converges rapidly towards the theoretical limit
$$\frac{\|u_0\|_{L^2}}{\sqrt{2\pi}}=\frac{3}{10}\sqrt{\frac{101}{2}}\simeq 2.1319.$$
The maximum and minimum principle are clearly visible.

\bigskip\par\noindent
\begin{minipage}{.98\linewidth}
\begin{center}
\captionsetup{type=figure}
\includegraphics[width=.48\textwidth,height=.3\textwidth]{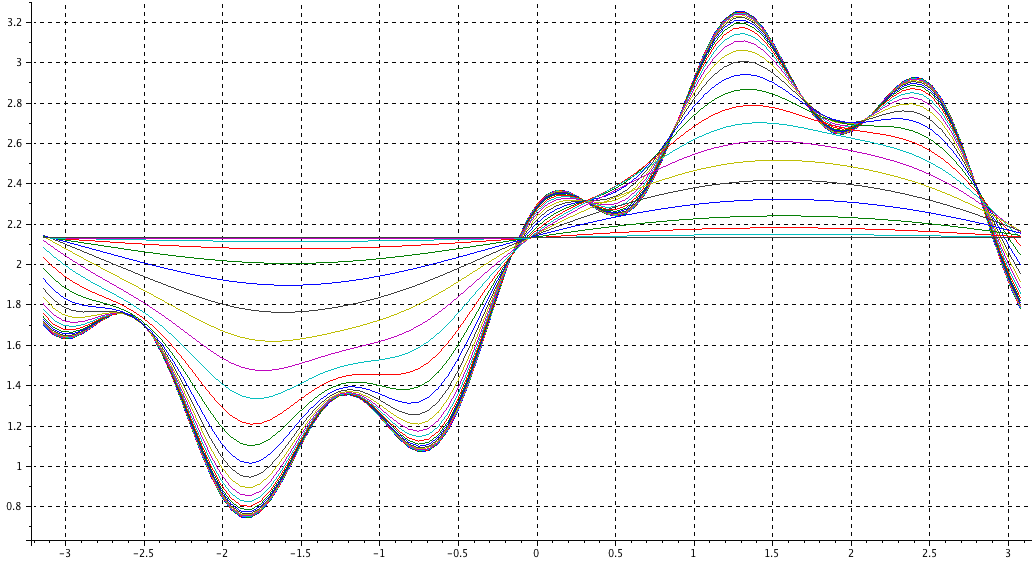}\hspace{\stretch{1}}\includegraphics[width=.48\textwidth,height=.3\textwidth]{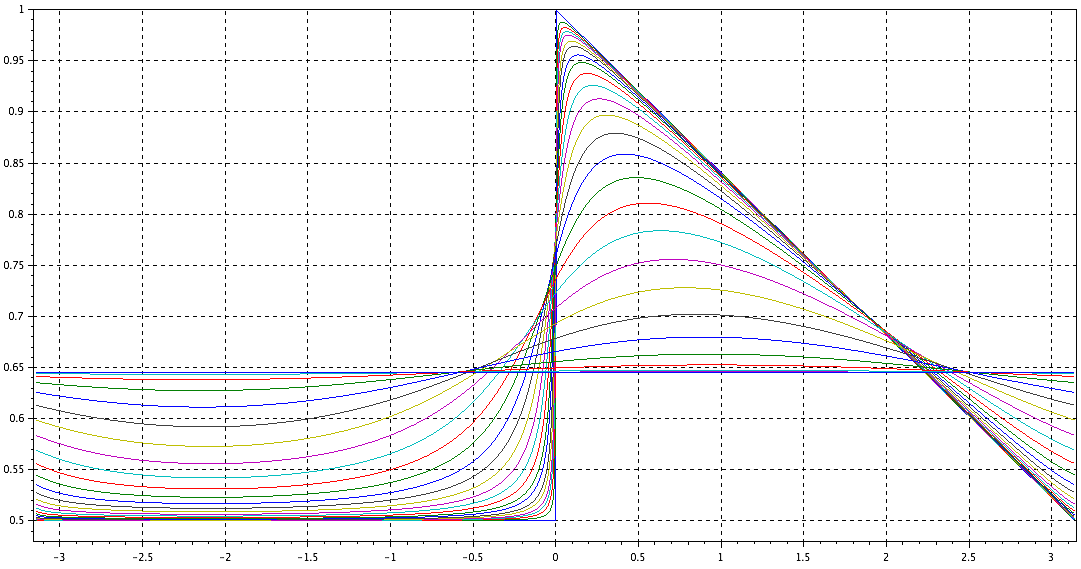}
\captionof{figure}{Evolution from various data. Left: $u_0(x)=2+\sin(x)+\frac{3}{10}\cos(5x)$.
Right: $u_0(x)=1/2$ if $x\in[-\pi,0)$ and $1-\frac{x}{2\pi}$ if $x\in(0,\pi]$.}\label{SimA}
\end{center}
\end{minipage}

\subsection{Evolution from discontinuous data}

Figure \ref{SimA}  shows also the evolution from a positive initial condition with a single discontinuity:
$$u_0(x)=\begin{cases}1/2 & \text{if }x\in[-\pi,0)\\ 1-\frac{x}{2\pi} & \text{if } x\in(0,\pi].\end{cases}$$
Even though the numerical scheme \eqref{numscheme} is not particularly refined, we did not observe any numerical instability. This observation is consistent with the instant smoothing effect of our \thm{t:weak}
and is a timid but positive point to support the conjecture that only one weak solution arises from positive bounded data.
We pushed the test further and Figure \ref{SimA3} shows the evolution from non-smooth numerical data cooked up to simultaneously be non-derivable at $\pm\pi$ and $-1$, have a jump at 0 followed by a chirp-like oscillation $\sqrt{x}\sin(1/x)$. The energy spectra (modulus of Fourier modes, in log-log scale) of various time-slices attest to the reality of the smoothing effect over 2 decades and beyond.

\bigskip\par\noindent
\begin{minipage}{\linewidth}
\begin{center}
\captionsetup{type=figure}
\includegraphics[width=.8\textwidth,height=.45\textwidth]{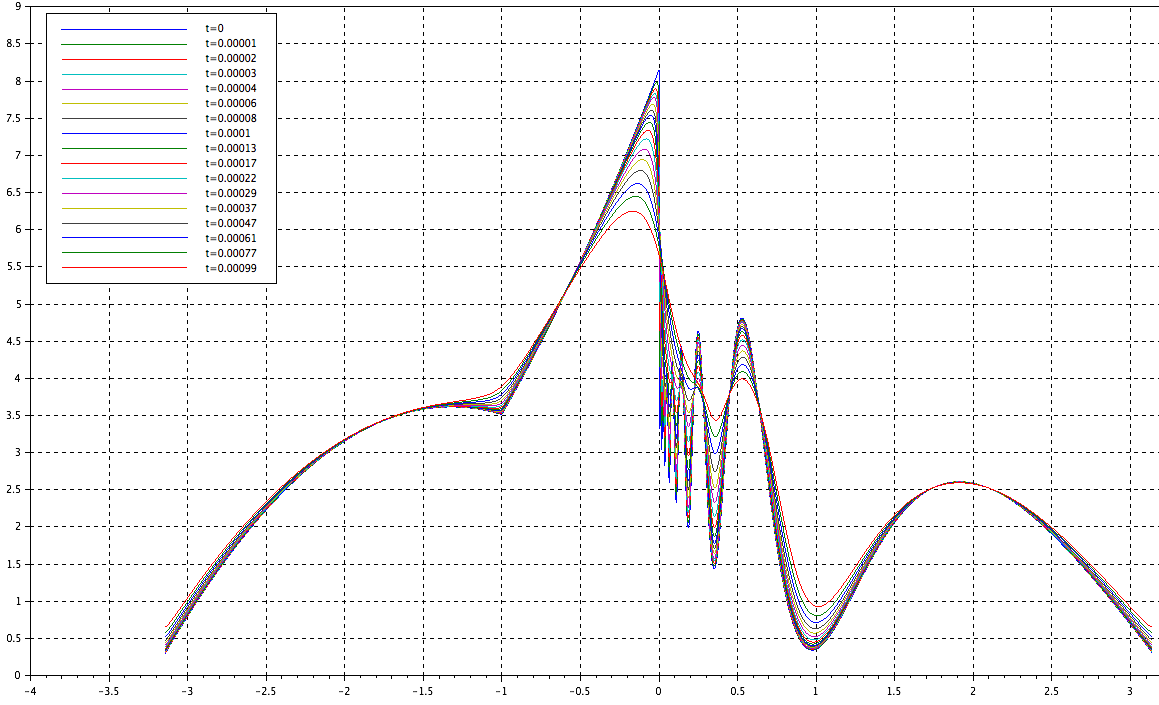}\\[3pt]\includegraphics[width=.83\textwidth,height=.45\textwidth]{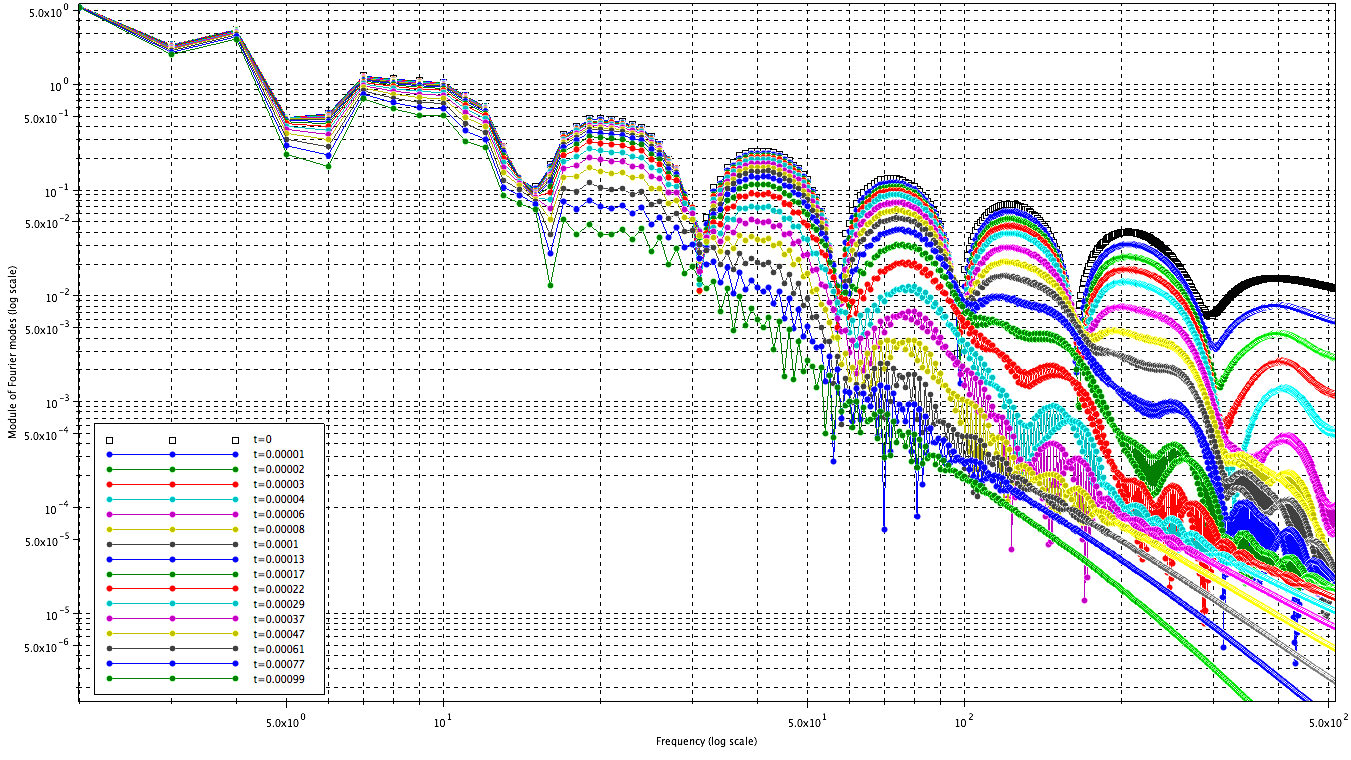}
\captionof{figure}{Evolution from non-smooth data (non-derivable at $\pm\pi$ and $-1$, a jump at 0 followed by a chirp $\sqrt{x}\sin(1/x)$) and energy spectrum.}
\label{SimA3}
\end{center}
\end{minipage}

\subsection{Evolution from negative data}
Figure \ref{SimD} shows the evolution starting from $$u_0(x)=-2-\sin(x)-\frac{3}{10}\cos(5x),$$ i.e. the opposite of the positive data of paragraph \ref{s:SPD}.
The first time-slices remain smooth as one could expect from the local well-posedness in analytic classes of Section \ref{par:analytic}. The (AMP)  principle is clearly visible as well. The last time-slice shows the emergence of the first singularity: a highly oscillatory singularity forms around the minimum value of $u_0$, while the rest of the solution remains relatively stable. A secondary instability is also barely discernible near the second lowest value of the data.

\medskip\par\noindent
\begin{minipage}{\linewidth}
\begin{center}
\captionsetup{type=figure}
\includegraphics[width=.9\textwidth,height=.45\textheight]{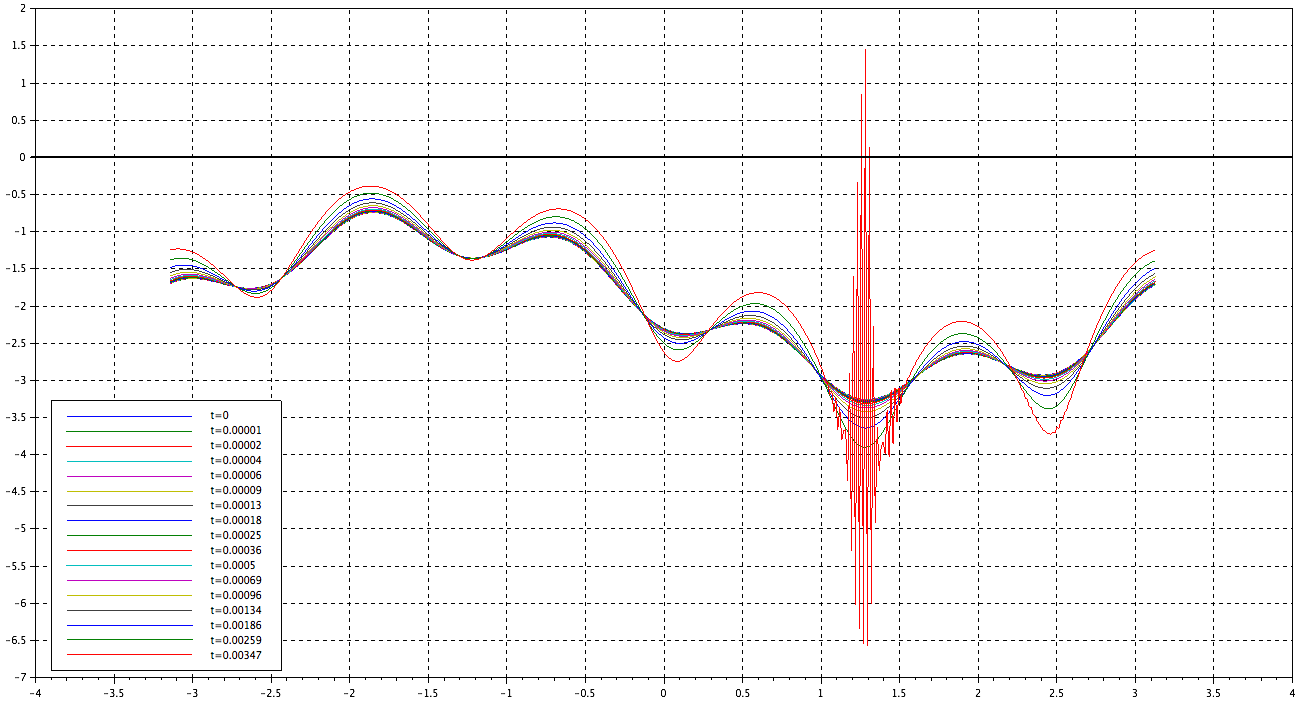}\\[1ex]
\includegraphics[width=.9\textwidth,height=.45\textheight]{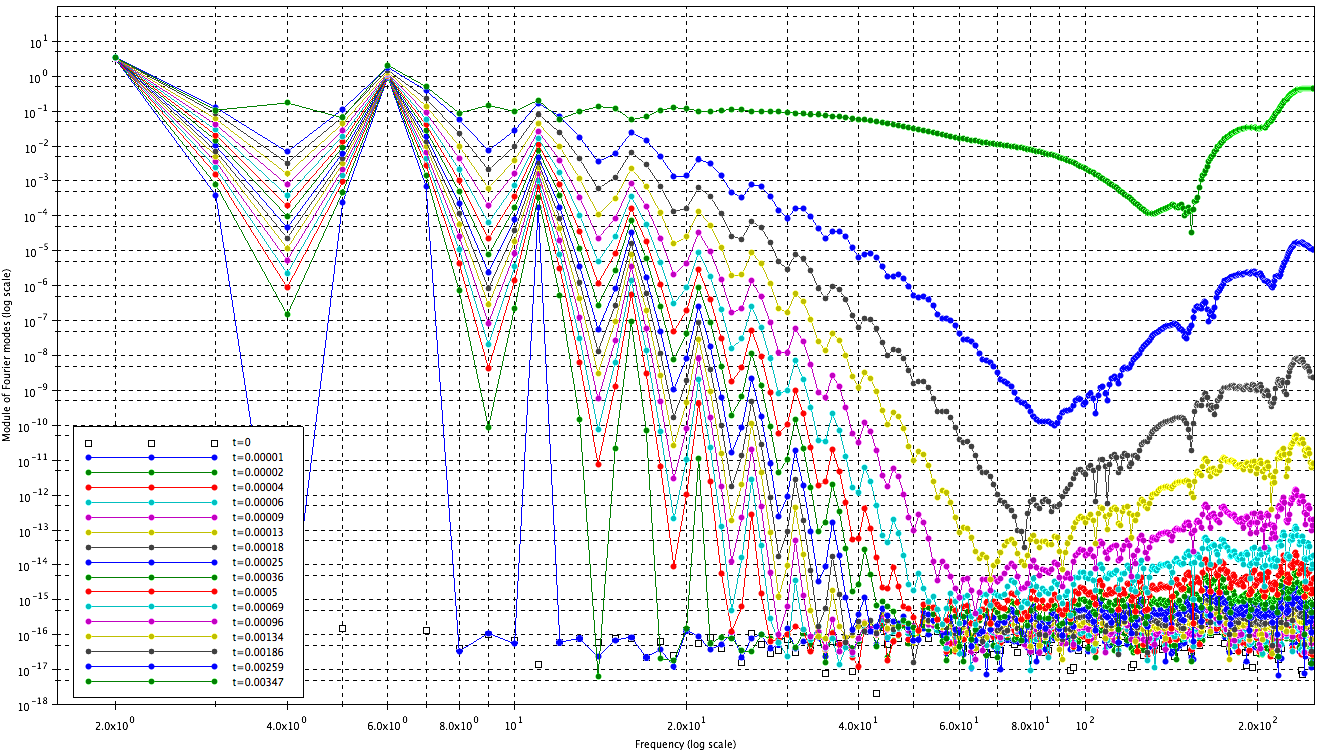}
\captionof{figure}{Evolution from negative data and evolution of the Fourier Spectrum showing the emergence of a singularity.}\label{SimD}
\end{center}
\end{minipage}

\subsection{Evolution from unsigned data with a positive average}

Let us write the Non-local Burgers equation~\eqref{e:main} in a form that separates the average value of the solution from its fluctuations around it. To this end, we introduce the zero-average function
$$v(t,x)=u(t,x)-\frac{1}{|\T^d|}\int_{\T^d} u(t,y) dy.$$
The average value can be recovered from $v$ by the momentum law (ML)
$$\int_{\T^d} u(t,y)dy = \int_{\T^d} u_0(x) dx + \int_0^t \|v(\tau)\|_{\dot{H}^{1/2}}^2 d\tau.$$
Note that the average of $u$ is globally bounded from above and below by:
$$\frac{1}{|\T^d|}\int_{\T^d} u_0(x)dx \leq  \frac{1}{|\T^d|}\int_{\T^d} u(t,y)dy \leq  \left(\frac{1}{|\T^d|}\int_{\T^d} u_0(x)^2 dx\right)^{1/2}.$$
The zero-average fluctuations $v$ evolve according to
\begin{equation}\label{eq:mean_value}
\partial_t v + \left(\frac{1}{|\T^d|}\int_{\T^d} u(t,y)dy\right) |\nabla|v = [v,|\nabla|]v  - \frac{1}{|\T^d|}\int_{\T^d} v |\nabla| v.
\end{equation}
Therefore, if the initial condition has a positive average, equation~\eqref{eq:mean_value} contains a uniform smoothing term on the left-hand side, while the right-hand side is a non-local, quadratic
but energy preserving non-linearity of order 1 i.e., formally:
$$\|v(t)\|_{L^2}^2 + 2 \int_0^t \left( \frac{1}{|\T^d|}\int_{\T^d} u(\tau,y)dy\right) \|v(\tau)\|_{\dot{H}^{1/2}} d\tau = \|v_0\|_{L^2}^2.$$

Figures \ref{SimB}, \ref{SimC1} and \ref{SimC2} show numerical simulations starting from different unsigned data but with a positive average.
 For the first one, the slight incursion in negative values does not seem to affect the solution.
The second one shows a localized transient singularity (which might have been smoothed out by the finite size of the mesh). The Fourier spectra at various time slices show
the emergence and the subsequent vanishing of the singularity.

Figure \ref{SimC2} displays an interesting feature of those transient instabilities: they appear to be localized along the negative local minima of the data. Their sequence of development seems also to start from the one with the highest absolute value and then proceed in order towards the ones of lower magnitude.

\bigskip
\par\noindent\textbf{Remark.}\quad
In its form \eqref{eq:mean_value}, the Non-local Burgers equation reveals some similarity with  the incompressible Navier-Stokes equation that can be written as
\[
\partial_t u - \nu \Delta u = - u \cdot \n u - \n p
\]
with $\operatorname{div} u=0$. We already pointed out the similarity of our model with the Euler part of NSE, especially if one computes the pressure in term of $u$. For the fluctuation $v$ we also consider that the
active ``viscosity" term on the left-hand side of \eqref{eq:mean_value} is similar to the kinematic viscosity in NSE. Despite this similarity with the notorious counterpart, we were able to prove global existence for our model. Of course, in our case this boils down to knowing extra structure, e.g.~the maximum principle. This once again highlights the importance of finding additional structural properties of the NSE in any attempt to approach its celebrated global regularity problem.

\bigskip\par\noindent
\begin{minipage}{\linewidth}
\begin{center}
\captionsetup{type=figure}
\includegraphics[width=.8\textwidth,height=.4\textwidth]{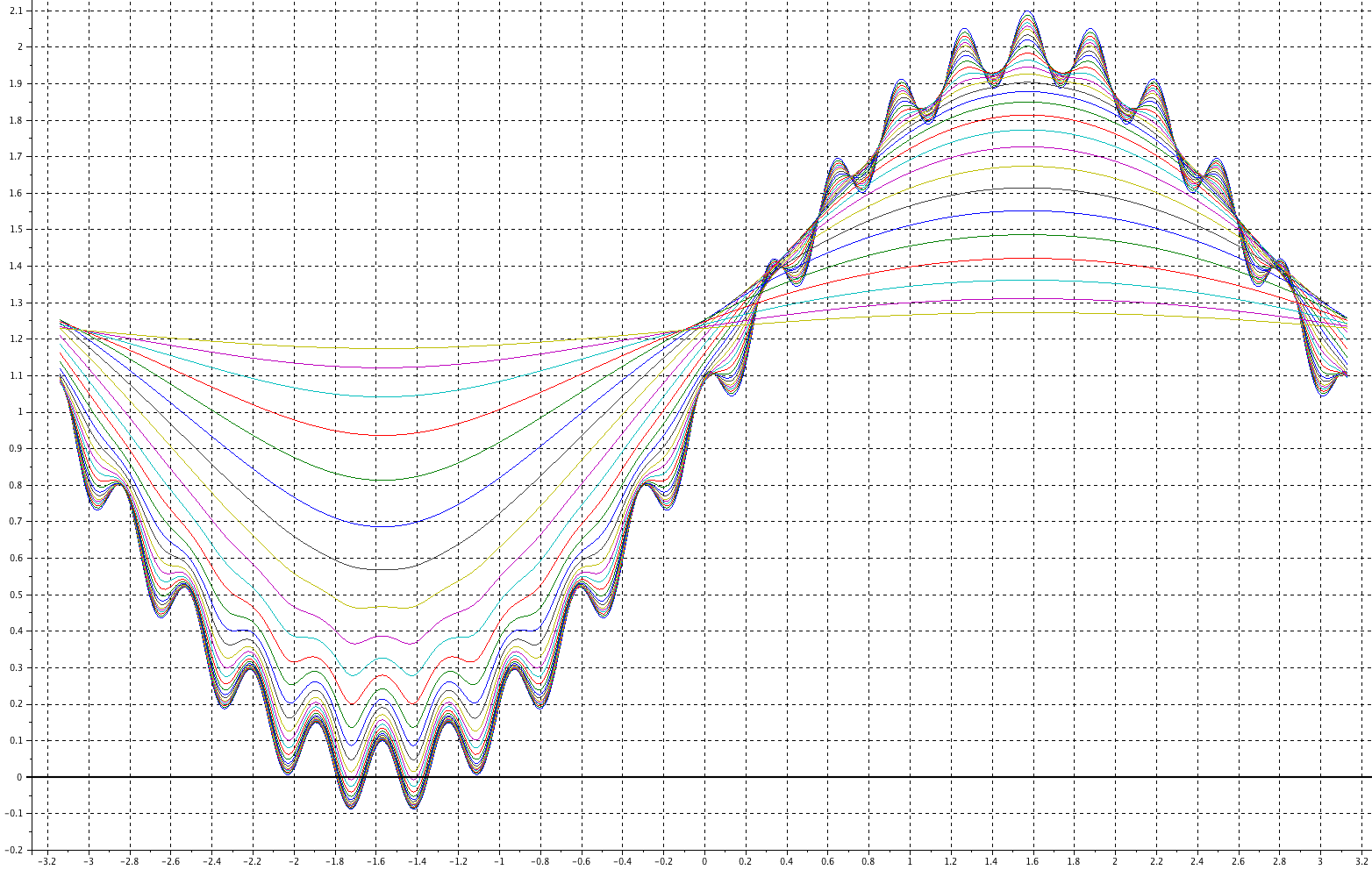}\\
\captionof{figure}{Evolution from $u_0=1+\sin(x)+\frac{1}{10}\cos(20x)$.}\label{SimB}

\bigskip
\includegraphics[width=.8\textwidth,height=.4\textwidth]{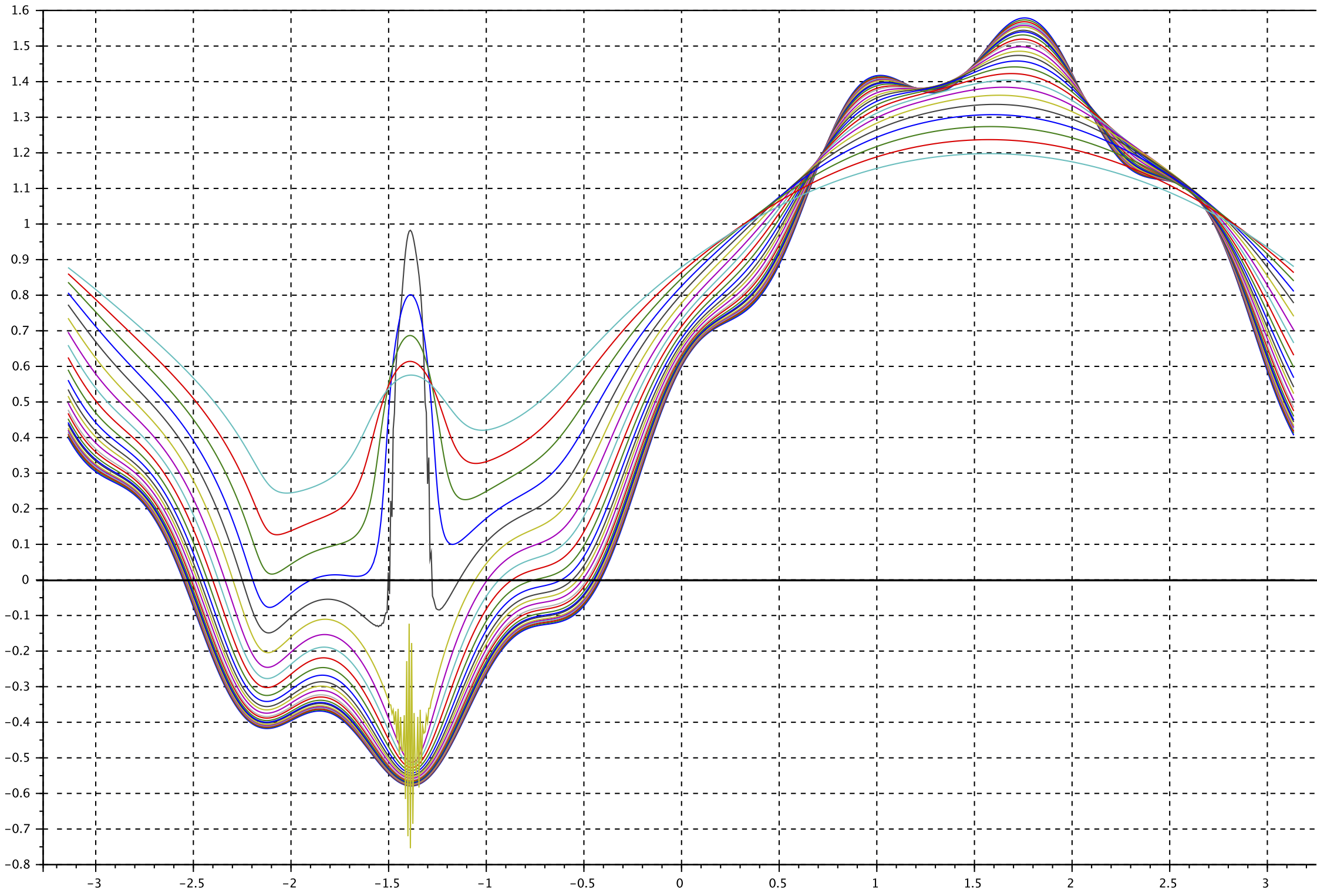}\\[1ex]
\includegraphics[width=.81\textwidth,height=.4\textwidth]{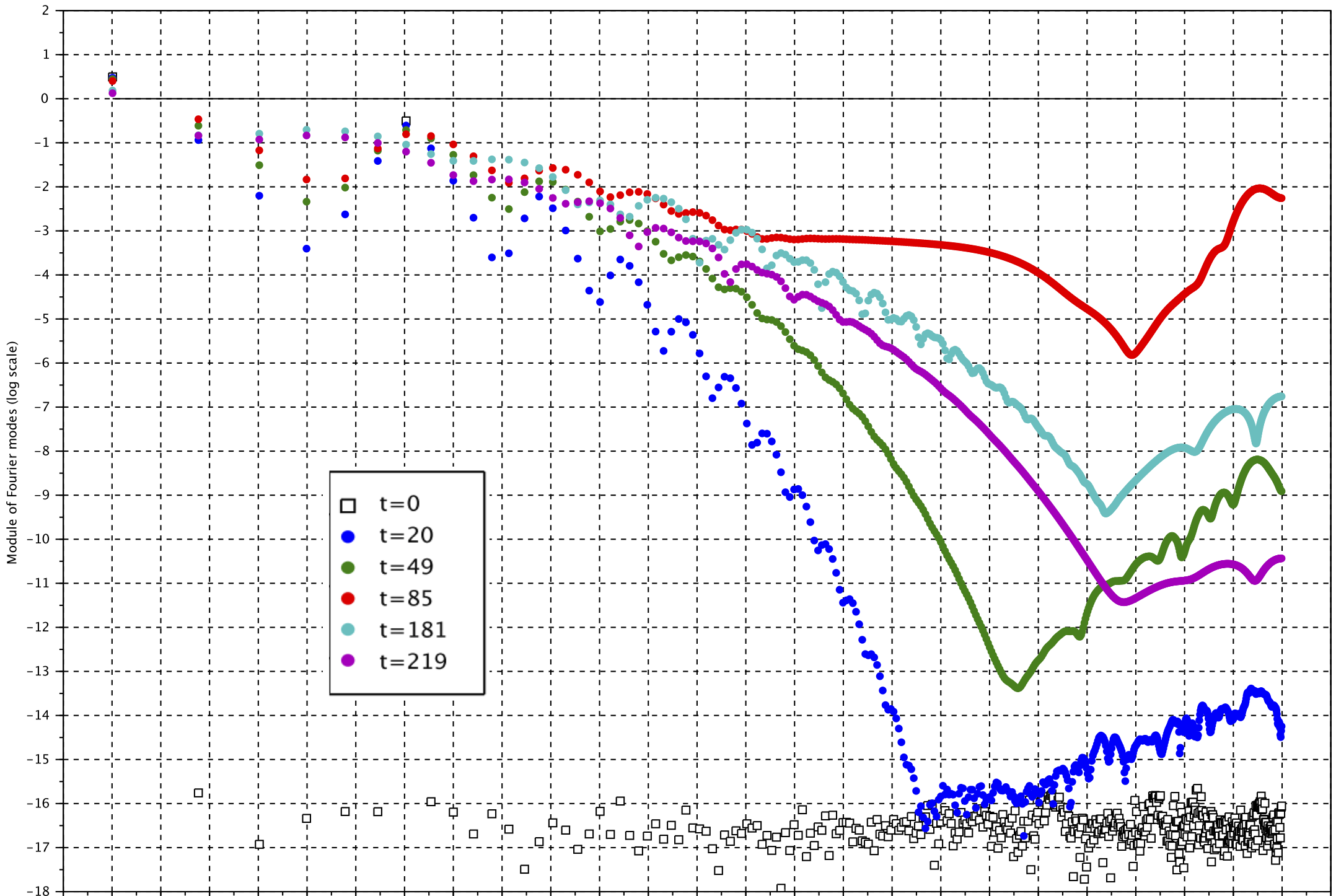}
\captionof{figure}{Evolution from $u_0=\frac{1}{2}+\sin(x)+\frac{1}{10}\cos(7x)$ and Fourier sprectrum.}\label{SimC1}
\end{center}
\end{minipage}

\begin{minipage}{\linewidth}
\begin{center}
\captionsetup{type=figure}
\includegraphics[width=.81\textwidth,height=.5\textwidth]{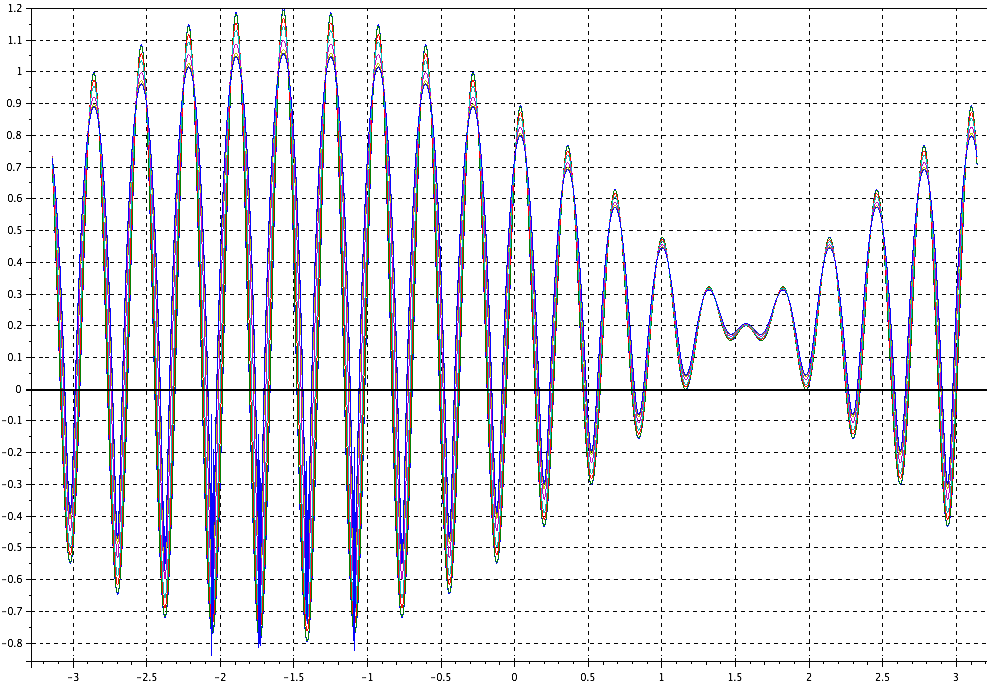}
\captionof{figure}{Evolution from $u_0=\frac{2}{10}+\frac{1}{2}\sin(19x)+\frac{1}{2}\cos(20x)$.}\label{SimC2}
\end{center}
\end{minipage}

\subsection{Global simulation with a finite element method}

To minimize the effects of time-discretization, we did numerical simulations of \eqref{e:main2} using finite elements, based on piecewise linear functions. The resulting problem is an ODE of the form
$A U' = J(U,U)$ where $U(t)\in \R^n$ represents the value of $(u(t,x_i))_{1\leq i\leq n}$, $A$ is a Toepliz matrix and $J$ is an $n$-dimensional vector of quadratic forms.
As energy is also preserved for the ODE (${}^t U AU=\text{Cte}$),  it is globally well posed for all times $t\in\R$ (forwards and backwards). 

As a happy coincidence due to the special structure of the non-linearity, diagonalizing the Toepliz matrix $A$ drastically simplifies each quadratic form in $J$, which then allows for a
direct computation of the solution.

Figure \ref{SimE} shows two solutions obtained in that manner with 20 grid-points. The interesting point of that simulation is that the flow
seems to connect a negative constant ground state to a positive one and that all the oscillations occur in the non sign-definite region. The level-sets of that region are shown above the 3D perspective.

\subsection{Perspective: the Frozen Non-local Burgers model}

As the previous numerical simulations have shown, the most interesting and mathematically challenging region is when~$u$ is non sign-definite. We feel that it is worth mentioning that the Non-local Burgers equation \eqref{e:main} comes with a natural twin in which the average value  is frozen instead of memorizing the $\dot{H}^{1/2}$ past norm. The Frozen Non-local Burgers equations reads:
\begin{equation}\label{eq:FNB}
\partial_t v = [v, |\nabla|] v - \frac{1}{|\T^d|}\int_{\T^d} v |\nabla| v.
\end{equation}
This equation should be compared with the previous equation \eqref{eq:mean_value} of the fluctuations $v$ of the solution of \eqref{e:main2}.
Formally, \eqref{eq:FNB} conserves momentum $\int_{\T^d} v(t,x)dx=\text{Cte}$. The previous intuition suggests that it would be most interesting to study it for vanishing average data, in which case it formally conserves the $L^2$-norm too.

\par\noindent
\begin{minipage}{\linewidth}
\begin{center}
\captionsetup{type=figure}
\includegraphics[width=.95\textwidth]{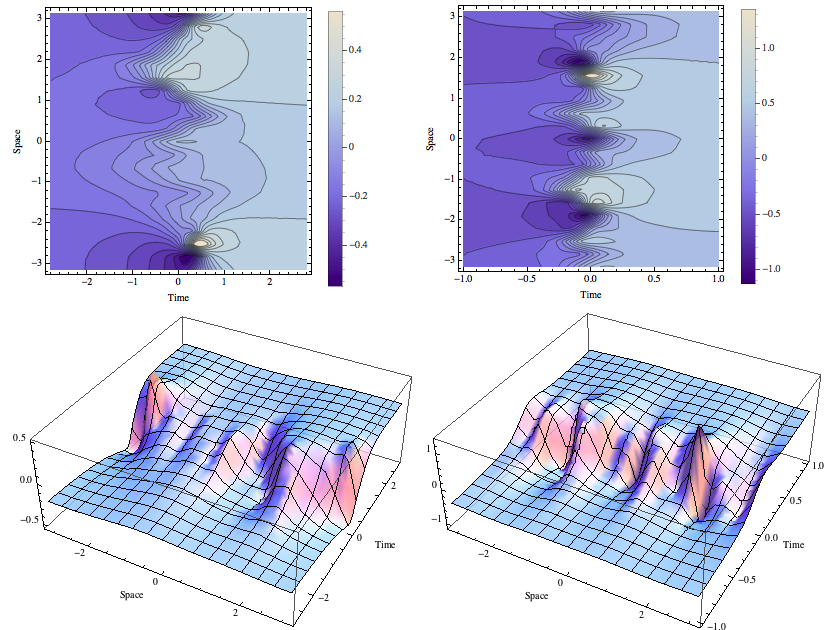}
\captionof{figure}{Global FEM simulations.}\label{SimE}
\end{center}
\end{minipage}

\bibliographystyle{plain}
\bibliography{reference}

\end{document}